\newtheorem{theorem}{{\sc Theorem}}[section]
\newtheorem{lemma}[theorem]{{\sc Lemma}}
\newtheorem{corollary}[theorem]{Corollary}
\newtheorem{remark}[theorem]{Remark}
\newcommand{\bb}[1]{\mathbb{ #1}}
\bmdefine\Bone{1}
\newcommand{\weak}{\rightharpoonup\:}
\newcommand{\eqv}{\Longleftrightarrow}
\newcommand{\bra}[1]{\overline{#1}}
\newcommand{\hf}{\displaystyle\frac{1}{2}}
\newcommand{\nth}[1]{\displaystyle\frac{1}{#1}}
\renewcommand{\Hat}[1]{\widehat{#1}}
\def\XXint#1#2#3{{\setbox0=\hbox{$#1{#2#3}{\int}$ }
\vcenter{\hbox{$#2#3$ }}\kern-.6\wd0}}
\newcommand{\rhs}{right-hand side}
\newcommand{\lhs}{left-hand side}
\newcommand{\IFF}{if and only if }
\newcommand{\Ga}{\alpha}
\newcommand{\Gb}{\beta}
\newcommand{\Ge}{\epsilon}
\newcommand{\Gg}{\gamma}
\newcommand{\Gl}{\lambda}
\newcommand{\Go}{\omega}
\newcommand{\Gz}{\zeta}
\newcommand{\GG}{\Gamma}
\newcommand{\GO}{\Omega}
\bmdefine\BGa{\alpha}
\bmdefine\BGb{\beta}
\bmdefine\BGd{\delta}
\bmdefine\BGe{\epsilon}
\bmdefine\BGve{\varepsilon}
\bmdefine\BGf{\phi}
\bmdefine\BGvf{\varphi}
\bmdefine\BGg{\gamma}
\bmdefine\BGc{\chi}
\bmdefine\BGi{\iota}
\bmdefine\BGk{\kappa}
\bmdefine\BGl{\lambda}
\bmdefine\BGn{\eta}
\bmdefine\BGm{\mu}
\bmdefine\BGv{\nu}
\bmdefine\BGp{\pi}
\bmdefine\BGth{\theta}
\bmdefine\BGvth{\vartheta}
\bmdefine\BGr{\rho}
\bmdefine\BGvr{\varrho}
\bmdefine\BGs{\sigma}
\bmdefine\BGvs{\varsigma}
\bmdefine\BGt{\tau}
\bmdefine\BGj{\tau}
\bmdefine\BGu{\upsilon}
\bmdefine\BGo{\omega}
\bmdefine\BGx{\xi}
\bmdefine\BGy{\psi}
\bmdefine\BGz{\zeta}
\bmdefine\BGD{\Delta}
\bmdefine\BGF{\Phi}
\bmdefine\BGG{\Gamma}
\bmdefine\BGL{\Lambda}
\bmdefine\BGP{\Pi}
\bmdefine\BGT{\Theta}
\bmdefine\BGS{\Sigma}
\bmdefine\BGU{\Upsilon}
\bmdefine\BGO{\Omega}
\bmdefine\BGX{\Xi}
\bmdefine\BGY{\Psi}
\bmdefine\BFM{\mathfrak{M}}
\bmdefine\BFb{\mathfrak{b}}
\bmdefine\BFk{\mathfrak{k}}
\bmdefine\BFm{\mathfrak{m}}
\bmdefine\BFu{\mathfrak{u}}
\bmdefine\BFv{\mathfrak{v}}
\newcommand{\CK}{{\mathcal K}}
\newcommand{\CR}{{\mathcal R}}
\bmdefine\BCA{{\mathcal A}}
\bmdefine\BCB{{\mathcal B}}
\bmdefine\BCC{{\mathcal C}}
\bmdefine\BCD{{\mathcal D}}
\bmdefine\BCE{{\mathcal E}}
\bmdefine\BCF{{\mathcal F}}
\bmdefine\BCG{{\mathcal G}}
\bmdefine\BCH{{\mathcal H}}
\bmdefine\BCI{{\mathcal I}}
\bmdefine\BCJ{{\mathcal J}}
\bmdefine\BCK{{\mathcal K}}
\bmdefine\BCL{{\mathcal L}}
\bmdefine\BCM{{\mathcal M}}
\bmdefine\BCN{{\mathcal N}}
\bmdefine\BCO{{\mathcal O}}
\bmdefine\BCP{{\mathcal P}}
\bmdefine\BCQ{{\mathcal Q}}
\bmdefine\BCR{{\mathcal R}}
\bmdefine\BCS{{\mathcal S}}
\bmdefine\BCT{{\mathcal T}}
\bmdefine\BCU{{\mathcal U}}
\bmdefine\BCV{{\mathcal V}}
\bmdefine\BCW{{\mathcal W}}
\bmdefine\BCX{{\mathcal X}}
\bmdefine\BCY{{\mathcal Y}}
\bmdefine\BCZ{{\mathcal Z}}
\bmdefine\Bzr{ 0}
\bmdefine\Ba{ a}
\bmdefine\Bb{ b}
\bmdefine\Bc{ c}
\bmdefine\Bd{ d}
\bmdefine\Be{ e}
\bmdefine\Bf{ f}
\bmdefine\Bg{ g}
\bmdefine\Bh{ h}
\bmdefine\Bi{ i}
\bmdefine\Bj{ j}
\bmdefine\Bk{ k}
\bmdefine\Bl{ l}
\bmdefine\Bm{ m}
\bmdefine\Bn{ n}
\bmdefine\Bo{ o}
\bmdefine\Bp{ p}
\bmdefine\Bq{ q}
\bmdefine\Br{ r}
\bmdefine\Bs{ s}
\bmdefine\Bt{ t}
\bmdefine\Bu{ u}
\bmdefine\Bv{ v}
\bmdefine\Bw{ w}
\bmdefine\Bx{ x}
\bmdefine\By{ y}
\bmdefine\Bz{ z}
\bmdefine\BA{ A}
\bmdefine\BB{ B}
\bmdefine\BC{ C}
\bmdefine\BD{ D}
\bmdefine\BE{ E}
\bmdefine\BF{ F}
\bmdefine\BG{ G}
\bmdefine\BH{ H}
\bmdefine\BI{ I}
\bmdefine\BJ{ J}
\bmdefine\BK{ K}
\bmdefine\BL{ L}
\bmdefine\BM{ M}
\bmdefine\BN{ N}
\bmdefine\BO{ O}
\bmdefine\BP{ P}
\bmdefine\BQ{ Q}
\bmdefine\BR{ R}
\bmdefine\BS{ S}
\bmdefine\BT{ T}
\bmdefine\BU{ U}
\bmdefine\BV{ V}
\bmdefine\BW{ W}
\bmdefine\BX{ X}
\bmdefine\BY{ Y}
\bmdefine\BZ{ Z}
\DeclareMathAlphabet{\pazocal}{OMS}{zplm}{m}{n}
\newcommand{\PH}{\pazocal{H}}
\newcommand{\PP}{\pazocal{P}}
\newcommand{\PW}{\pazocal{W}}
\DeclareMathAlphabet{\cg}{OMS}{zplm}{m}{n}
\newcommand{\HH}{\mathbb{H}}
\newcommand{\NN}{\mathbb{N}} 
\newcommand{\RR}{\mathbb{R}}      
\newcommand{\ZZ}{\mathbb{Z}}      
\newcommand{\D}{\text{d}}
\begin{document}


\title{Explicit power laws in analytic continuation problems via reproducing kernel Hilbert spaces.}
  
\author{Yury Grabovsky \and Narek Hovsepyan}
\date{}
\maketitle

\begin{abstract}
The need for analytic continuation arises frequently in the context of inverse problems. Notwithstanding the uniqueness theorems, such problems are notoriously ill-posed without additional regularizing constraints. We consider several analytic continuation problems with typical global boundedness constraints that restore well-posedness. We show that all such problems exhibit a power law precision deterioration as one moves away from the source of data. In this paper we demonstrate the effectiveness of our general Hilbert space-based approach for determining these exponents. The method identifies the ``worst case'' function as a solution of a linear integral equation of Fredholm type. In special geometries, such as the circular annulus or upper half-plane this equation can be solved explicitly. The obtained solution in the annulus is then used to determine the exact power law exponent for the analytic continuation from an interval between the foci of an ellipse to an arbitrary point inside the ellipse. Our formulas are consistent with results obtained in prior work in those special cases when such exponents have been determined.
\end{abstract}


\section{Introduction}
\setcounter{equation}{0} 
\label{sec:intro}
Many inverse problems reduce to analytic continuation questions when solutions
of direct problems are known to possess analyticity in a domain in the complex
plane but can be measured only on a subset (often a part of the boundary) of
this domain. For example, if one wants to recover a signal corrupted by a
low-pass convolution filter, then one needs to recover an entire function from
its measured values on an interval \cite{digr01,bdmh19}.  Another large class
of inverse problems can be termed ``Dehomogenization'' \cite{chou08,ou14}, where
one wants to reconstruct some details of microgeometry from measurements of
effective properties of the composite. The idea of reconstruction is based on
the analytic properties of effective moduli \cite{berg,mi81,gopa83} of
composites. See e.g. \cite{mchcg15} for an extensive bibliography in this area.

The method of recovery via analytic continuation is a tempting proposition in
view of the uniqueness properties of analytic functions. Unfortunately,
analyticity is a local property ``stored'' at an infinite depth within the
continuum of function values and can be represented by delicate cancellation
properties responsible for the validity of Carleman and Carleman type
extrapolation formulas \cite{carl26,gokr33,aize12}. Adding small errors to the
exact values of analytic functions destroys these local properties. Instead we
want to accumulate the remnants of analyticity and use global properties of
analytic functions to achieve analytic continuation. This is only possible
under some additional regularizing constraints, such as global boundedness
\cite{doug60,cami65,stef86,chuyu,trefe19}. Taking this idea to the extreme, any
bounded entire function is a constant by Liouville's theorem, so that the
effect of boundedness depends strongly on the geometry of the domain of
analyticity. 

In order to quantify the degree to which analytic continuation is
possible, consider an analytic function $F$ in a domain $\Omega$. Assume that
$F$ is measured on a curve $\Gamma \Subset \Omega$ with a relative error
$\Ge$, with respect to some norm $\|F\|_\Gamma$. Can one perform an analytic
continuation of $F$ from $\GG$ to $\GO$ in the presence of measurement errors?
Without discussing specific analytic continuation algorithms we would like to
examine theoretical feasibility of such a procedure. For example, if two
different algorithms are deployed matching $F$ on $\GG$ with relative
precision $\Ge$ how far their outputs could possibly differ at a given point
$z\in\GO\setminus\GG$? To answer this question we consider the difference $f$
of the two purported analytic continuations. Such a difference will be small
on $\GG$, and we want to quantify how large such a function can possibly be at
some point $z\in\GO$ relative to its global size on $\GO$. 

Based on established upper and lower bounds, exact and numerical results
\cite{davis52,cami65,ciulli69,mill70,payne75,fran90,vese99,fdfd09,deto18,
trefe19} a general
\emph{power law principle} emerges, whereby the relative precision of analytic
continuation decays as power law $\Ge^{\Gg(z)}$, where the exponent
$\Gg(z)\in(0,1)$ decreases to 0, as we move further away from the source of
data. How fast $\Gg(z)$ decays depends strongly on the geometry of the domain
and the data source \cite{trefe19,grho-gen}. In \cite{grho-gen} we considered
an example, where $\GO$ is the complex upper half-plane and $\GG$ is the
interval $[-1,1]$ on the real axis. We have proved that for $z$ in the upper
half-plane $\Gg(z)$ is the angular size of the interval $[-1,1]$ as viewed
from $z$, measured in units of $\pi$. Conformal mappings can also be used to
relate the exponents for one geometry to the exponents for the conformally
equivalent ones. We believe that such power law transition from well-posedness
to practical ill-posedness is a general property of analytic continuation,
quantifying the tug-of-war between their rigidity (unique continuation
property) and flexibility (as in the Riesz density theorem \cite{part97}). 

The lower bounds on $\Gg(z)$ can be obtained by exhibiting bounded analytic
functions that are small on a curve $\GG$, but not quite as small at a
particular extrapolation point. The upper bounds are harder to prove but there
is ample literature where such results are achieved
\cite{davis52,cami65,ciulli69,mill70,payne75,fran90,vese99,fdfd09,deto18,
trefe19}. In fact, it
was observed in \cite{trefe19} that upper and lower bounds of the form
$\Ge^{\Gg(z)}$ on the extrapolation error do hold for all geometries. However,
with few exceptions the upper and lower bounds do not match. In those examples
where they do match \cite{deto18,trefe19} the optimality of the bounds are
concluded a posteriori.

In our recent work \cite{grho-gen} we have developed a new method for
characterizing analytic functions in the upper half-plane $\bb{H}_{+}$
attaining the optimal upper bound in terms of a solution of an integral
equation of the second kind with compact, positive, self-adjoint operator on
$L^{2}(\GG)$. In Section~\ref{SECT RKHS}, we extend this result to reproducing
kernel Hilbert spaces $\PH = \PH(\Omega)$ of analytic functions in a domain
$\GO\subset\bb{C}$. The error maximization problem is reformulated as a
maximization of a linear objective functional subject to quadratic
constraints, permitting us to use convex duality methods. The optimality
conditions take the form of a linear integral equation of Fredholm type, where
the positive, compact self-adjoint operator $\CK$ is expressed in terms of the
reproducing kernel of $\PH(\Omega)$. The integral operator $\CK$ occurs
frequently in the context of reproducing kernel Hilbert spaces
(e.g. \cite{davis52}) and is related to the restriction operator $\CR: \PH \to
L^2(\Gamma)$. Namely, $\CK = \CR^* \CR$. The exponent $\Gg(z)$ in the power
law asymptotics can then be expressed in terms of the rates of exponential
decay of eigenvalues of the integral operator $\CK$ and its eigenfunctions at
the extrapolation point $z\in\Omega$. For certain classes of restriction
operators the exponential decay of the eigenvalues of $\CK$ has been known for
a long time, and their exact asymptotics has been established in
\cite{parfenov} (see also \cite{zask76,parf78,gps03,pute17}).  Alternatively,
the exponent $\Gg(z)$ can be read off the explicit solution of the integral
equation in cases where such an explicit solution is available
\cite{grho-gen}. This allows us to compute $\Gg(z)$ explicitly in a number of
special cases. For example, when $\GG$ is a circle in the upper half-plane
(Section~\ref{SECT upper halfplane}) or a circle in an annulus
(Section~\ref{SECT annulus}).

In Section~\ref{SECT ellipse} we present a somewhat unexpected application of
the annulus result to the problem of analytic continuation in a Bernstein
ellipse \cite{bern12}, studied in \cite{deto18}. Since the annulus is not conformally
equivalent to the ellipse one would not expect a direct relation. The trick we
use, inspired by \cite{deto18}, is to map the Bernstein ellipse cut along
$[-1,1]$ onto the annulus using the inverse of the Joukowski function. Then,
functions analytic in the ellipse are distinguished from functions analytic in
the cut ellipse by their continuity across the cut. After the conformal
transformation the image of functions analytic in the entire ellipse would
consist of functions analytic in the annulus with a reflection symmetry on the
unit circle.  Our Hilbert space-based approach can easily incorporate linear
constraints by making an appropriate choice of the underlying Hilbert
space. However, the question is about the relation between the problems with
and without such constraints. In the case of the Bernstein ellipse and the
annulus, we discover that the subspace of functions analytic in the
annulus corresponding to functions analytic in the Bernstein ellipse is
invariant with respect to the integral operator $\CK$. It is this
invariance that permits us to solve the problem with additional linear
constraints using the known solution of the original problem. This is
discussed in Section~\ref{sub:lincon}. When the extrapolation point $z$ lies
on the real line inside the Bernstein ellipse we recover the optimal exponent
$\Gg(z)$ obtained in \cite{deto18}. However, our approach also gives the
formula for the exponent $\Gg(z)$ for arbitrary points $z$ inside the ellipse.

\section{Main Results}

\noindent \textbf{Notation:}  We will write $A
\lesssim B$, if there exists a constant $c$ such that $A \leq c B$ and
likewise the notation $A \gtrsim B$ will be used. If both $A
\lesssim B$ and $A \gtrsim B$ are satisfied, then we will write $A \simeq B$. Throughout the paper all the implicit constants will be independent of the parameter $\epsilon$.

\subsection{The annulus} \label{SECT annulus}

For $0<\rho <1$, $r>0$ let
\begin{equation} \label{A_rho and gamma}
\begin{split}
A_\rho = \{\zeta \in \mathbb{C} : \rho < |\zeta| < 1\},
\qquad \quad
\Gamma_r = \{\zeta \in \mathbb{C} : |\zeta| = r\}.
\end{split}
\end{equation}

\noindent Consider the Hardy space (e.g. \cite{duren})

\begin{equation} \label{H^2 annulus def}
H^2(A_\rho) = \{f \ \text{is analytic in} \ A_\rho: \|f\|_{H^2(A_\rho)}=\sup_{\rho < r < 1}  \|f\|_{L^2(\Gamma_r)}  < \infty \},
\end{equation}
where for a curve $\Gamma\subset\bb{C}$ the space $L^2(\Gamma)$ denotes the
space of square-integrable functions on $\GG$ with respect to the arc length measure
$|\D \tau|$ on $\Gamma$.
\begin{theorem}[Annulus] \label{THM annulus} Let $\Gamma = \Gamma_r$ with $r
  \in (\rho, 1)$ fixed and $z \in A_\rho \backslash \Gamma$. Then there exists
  $C>0$, such that for any $\Ge>0$ and any $f \in H^2(A_\rho)$ with
  $\|f\|_{H^2(A_\rho)} \leq 1$ and $\|f\|_{L^2(\Gamma)} \leq \epsilon$, we
  have
\begin{equation} \label{main bound annulus}
|f(z)| \leq C \epsilon^{\gamma(z)},
\end{equation}
where
\begin{equation} \label{gamma annulus}
\gamma(z) =
\begin{cases}
\dfrac{\ln |z|}{\ln r},  &\text{if} \ \ r<|z|<1
\\[3ex]
\dfrac{\ln (|z| / \rho)}{\ln (r/\rho)}, &\text{if} \ \ \rho<|z|<r
\end{cases}
\end{equation}
Moreover, \eqref{main bound annulus} is asymptotically optimal in $\epsilon$ and the function attaining the bound is
\begin{equation} \label{M annulus}
M(\zeta) = \epsilon^{2-\gamma(z)} \sum_{n \in \ZZ} \frac{(\overline{z}\zeta)^n}{r^{2n} + \epsilon^2 (1+\rho^{2n})}, \qquad \zeta \in A_\rho.
\end{equation}

\noindent In addition $M$ is analytic in the closure of $A_\rho$ and $\|M\|_{H^\infty (\overline{A_\rho})}$ is bounded uniformly in $\epsilon$.
\end{theorem}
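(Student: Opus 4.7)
The plan is to specialize the general reproducing kernel Hilbert space framework of Section~\ref{SECT RKHS} to $\PH = H^2(A_\rho)$. The extremal problem
$$\sup\{|f(z)|\;:\;\|f\|_{H^2(A_\rho)}\le 1,\ \|f\|_{L^2(\Gamma)}\le\epsilon\}$$
is cast in Lagrangian form and its optimality condition becomes a Fredholm integral equation of the second kind $(I+\mu\CK)f = \alpha\,K(\cdot,z)$, where $K(\zeta,w)$ is the reproducing kernel of $H^2(A_\rho)$, $\CR:H^2(A_\rho)\to L^2(\Gamma)$ is the restriction operator, and $\CK = \CR^*\CR$. The Lagrange multiplier $\mu$ and scalar $\alpha$ are determined by requiring both norm constraints to be saturated to leading order in $\epsilon$.

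The decisive simplification is rotational symmetry: the Laurent monomials $\{\zeta^n\}_{n\in\ZZ}$ are pairwise orthogonal both in $H^2(A_\rho)$ and in $L^2(\Gamma_r)$, with the two squared norms scaling like $1+\rho^{2n}$ and $r^{2n}$ respectively. Consequently $K$ and $\CK$ are simultaneously diagonalized by $\{\zeta^n\}$, and the optimality equation decouples into scalar equations indexed by $n$. Solving these term by term yields a Laurent series of exactly the form \eqref{M annulus}, with the prefactor $\epsilon^{2-\gamma(z)}$ arising from the Lagrange scaling needed to saturate $\|f\|_{L^2(\Gamma)}\simeq\epsilon$.

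The main computational task is then the $\epsilon\to 0$ asymptotic analysis of
\[
|M(z)| \,=\, \epsilon^{2-\gamma(z)}\Bigl|\sum_{n\in\ZZ}\frac{|z|^{2n}}{r^{2n}+\epsilon^2(1+\rho^{2n})}\Bigr|.
\]
For $n\ge 0$ the denominator behaves like $\max(r^{2n},\epsilon^2)$, while for $n<0$ it behaves like $\max(r^{2n},\epsilon^2\rho^{2n})$. Splitting the series at the crossover index $n_\ast$ where these two scales coincide and summing the two resulting geometric tails shows that the dominant term has magnitude $\simeq \epsilon^{2\gamma(z)-2}$, with $\gamma(z)$ taking precisely the two forms of \eqref{gamma annulus} depending on whether $n_\ast>0$ (which occurs for $|z|>r$) or $n_\ast<0$ (for $|z|<r$). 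Combined with the prefactor this gives $|M(z)|\simeq\epsilon^{\gamma(z)}$, and the upper bound \eqref{main bound annulus} for arbitrary admissible $f$ then follows by duality from the reproducing identity $f(z)=\langle f,K(\cdot,z)\rangle_{\PH}$ and the extremality of $M$. I expect this crossover/geometric-sum analysis to be the main technical obstacle, as one must control the transition between the two regimes uniformly in $\epsilon$ and keep track of logarithmic corrections in order to produce the sharp exponent.

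Finally, the regularity claims---analyticity of $M$ across $\partial A_\rho$ and uniform boundedness of $\|M\|_{H^\infty(\overline{A_\rho})}$ in $\epsilon$---are read off directly from the series: for fixed $\epsilon>0$ the denominator grows without bound as $|n|\to\infty$ (like $r^{2n}$ as $n\to-\infty$ and remaining of order $\epsilon^2$ as $n\to+\infty$, while the numerator $|(\bar z\zeta)^n|$ decays exponentially as $n\to+\infty$ and is outpaced by $\rho^{2n}$ as $n\to-\infty$ since $|\zeta|\ge\rho$), so the series converges in a neighborhood of $\overline{A_\rho}$. The same crossover estimate applied on the two bounding circles then supplies the $\epsilon$-independent $H^\infty$ bound.
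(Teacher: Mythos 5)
Your proposal follows essentially the same route as the paper: specialize the RKHS machinery to $H^2(A_\rho)$, observe that rotational symmetry makes the Laurent monomials simultaneously orthogonal in $\PH$ and $L^2(\Gamma_r)$ so that $\CK$ diagonalizes, read off the extremal function $M$ term-by-term from the Fredholm equation, and extract the power law from the crossover index where $r^{2n}$ and $\epsilon^2\rho^{2n\wedge 0}$ balance (this is exactly the content of the paper's Lemma~\ref{LEM sum asymptotics}). The only cosmetic differences are that the paper works with the normalized basis $\{e_n\}$ and an equivalent inner product before recombining the two half-series into the single expression \eqref{M annulus}, and it explicitly checks $p_z\notin\CK(\PW)$ to rule out the numerically stable regime, a point your direct asymptotic computation of $|M(z)|\simeq\epsilon^{\gamma(z)}$ handles implicitly.
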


\vspace{.1in}

\begin{remark}
The statement that $M$ attains the bound in \eqref{main bound annulus} means that $\|M\|_{H^2(A_\rho)} \lesssim 1$, \ $\|M\|_{L^2(\Gamma)} \lesssim \epsilon$ and $|M(z)| \simeq \epsilon^{\gamma(z)}$, with all implicit constants independent of $\epsilon$.
\end{remark}

It is somewhat surprising that the worst case function, which was required to
be analytic only in $A_{\rho}$ is in fact analytic in a larger annulus
$\{|z^*_\rho| < |\zeta| < |z^*_1| \}$, where $z^*_1 = 1/
\overline{z}$ is the point symmetric to $z$ w.r.t the circle $\Gamma_1$ and
$z^*_\rho = \rho^2 / \overline{z}$ is the point symmetric to $z$ w.r.t the
circle $\Gamma_\rho$. In particular, $M \in H^\infty (A_\rho)$. Hence,
$M(\zeta)$ also maximizes $|M(z)|$, asymptotically, as $\Ge\to 0$, if the
constraints were given in $H^{\infty}(A_\rho)$ and $L^{\infty}(\GG)$, instead of $H^{2}(A_\rho)$
and $L^{2}(\GG)$, respectively.

\begin{remark} \label{REM stability circle} The limiting case as $\rho \to 0$
  corresponds to the analytic continuation from the circle $\Gamma_r$ into the
  unit disk $D$. The limiting value of the exponent is $\gamma(z) = \frac{\ln
    |z|}{\ln r}$ for $|z|>r$, and $\gamma(z)=1$, for $|z|<r$. The numerical
  stability of extrapolation inside $\GG_{r}$ can be seen directly from
  Cauchy's integral formula. The same formula for $\Gg(z)$ has been obtained
  in \cite{trefe19} for $H^{\infty}(D)$.  
\end{remark}

\subsection{The upper half-plane} \label{SECT upper halfplane}

Let $\HH_+=\{z\in\bb{C}:\Im(z)>0\}$ denote the complex upper half-plane and consider the
Hardy space

\begin{equation*}
H^2(\HH_+) := \{f \ \text{is analytic in} \ \HH_+: \sup_{y>0} \|f(\cdot + iy)\|_{L^2(\RR)} < \infty \}.
\end{equation*}

\noindent It is well known \cite{koos98} that these functions have
$L^2$-boundary data, and that $\|f\|=\|f\|_{L^2(\RR)}$ defines a norm in
$H^2(\HH_+)$. Assume that the data curve $\Gamma \Subset \HH_+$ is a
circle. By considering affine automorphisms $z\mapsto az+b$, $a>0,\
b\in\bb{R}$, of $\HH_+$ we may ``translate'' $\Gamma$ to be centered at $i$.

\begin{theorem} \label{THM H+} Let $\GG$ be a circle centered at $i$ of radius
  $r<1$. Let $z\in\bb{H}_{+}$ be a point outside of $\GG$. Then there exists
  $C>0$, such that for any $\Ge>0$ and any $f \in H^2(\HH_+)$ with
  $\|f\|_{H^2(\HH_+)} \leq 1$ and $\|f\|_{L^2(\Gamma)} \leq \epsilon$, we have
\begin{equation} \label{main bound H+}
|f(z)|\leq C \Ge^{\Gg(z)},
\end{equation}

\noindent where

\begin{equation}
  \label{gammaUHP}
\Gg(z)=\frac{\ln|m(z)|}{\ln\rho},\qquad \qquad \rho=\frac{1-\sqrt{1-r^2}}{r},
\end{equation}

\noindent and 
\[
m(\zeta) = \frac{\zeta - z_0}{\zeta + z_0}, \qquad 
z_0 = i \sqrt{1-r^2}
\] 
is the M\"obius map transforming the upper half-plane into the unit disc and the circle $\GG$ into a concentric circle, whose radius has to be $\rho$. Moreover, \eqref{main bound H+} is asymptotically optimal in $\epsilon$ and the function attaining the bound can be written as a convergent in the upper half-plane ``power'' series

\begin{equation}
  \label{M upper halfplane}
  M(\Gz)=\frac{\epsilon^{2-\gamma(z)}}{\Gz+z_{0}}\sum_{n=1}^{\infty}\frac{\left(\bra{m(z)}m(\Gz)\right)^{n}}
{\Ge^{2}+\rho^{2n}},\qquad\Gz\in\bb{H}_{+}.
\end{equation}
\end{theorem}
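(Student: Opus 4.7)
The plan is to transfer the problem from $\bb{H}_+$ to the unit disk $D$ via the M\"obius map $m$ and then invoke the limiting $H^2(D)$ case of Theorem~\ref{THM annulus} recorded in Remark~\ref{REM stability circle}. Since $m$ is a conformal isomorphism of $\bb{H}_+$ onto $D$ sending $\GG$ to the concentric circle $\GG_\Gr$, the exterior of $\GG$ in $\bb{H}_+$ maps to the annular region $\{w:\Gr<|w|<1\}$, and the extrapolation point $z$ corresponds to $m(z)$ with $\Gr<|m(z)|<1$. Thus the target exponent $\Gg(z)=\ln|m(z)|/\ln\Gr$ is precisely the annulus exponent \eqref{gamma annulus} (with $r$ replaced by $\Gr$) evaluated at $m(z)$.

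To transport the norms I will use the standard conformal isometry induced by $m$: given $f\in H^2(\bb{H}_+)$, set
\[
g(w)=\sqrt{(m^{-1})'(w)}\,f\bigl(m^{-1}(w)\bigr),\qquad w\in D.
\]
Because $(m^{-1})'(w)=2z_0/(1-w)^2$ is nonvanishing and holomorphic on $D$, a single-valued holomorphic branch of the square root exists. The change of variables $\Gz=m^{-1}(w)$ together with $|d\Gz|=|(m^{-1})'(w)|\,|dw|$ along both $\bb{R}$ and $\GG$ yields $\|g\|_{H^2(D)}=\|f\|_{H^2(\bb{H}_+)}\le 1$ and $\|g\|_{L^2(\GG_\Gr)}=\|f\|_{L^2(\GG)}\le\Ge$. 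Applying Remark~\ref{REM stability circle} (equivalently, Theorem~\ref{THM annulus} with any auxiliary inner radius smaller than $\Gr$, noting that the two $H^2$ norms coincide on $H^2(D)$) produces $|g(w)|\lesssim\Ge^{\ln|w|/\ln\Gr}$ for $\Gr<|w|<1$. Substituting $w=m(z)$ and using $|f(z)|=|g(m(z))|\sqrt{|m'(z)|}$, where $\sqrt{|m'(z)|}$ depends only on $z$, yields \eqref{main bound H+}.

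For the asymptotic optimality I will take $M$ to be the pullback through $m$ of the corresponding disk extremizer $G(w)$ obtained from \eqref{M annulus} by replacing $r$ with $\Gr$ and letting the inner radius tend to $0$; the terms with $n<0$ drop out since their denominators blow up. A direct computation gives $\sqrt{m'(\Gz)}=\sqrt{2z_0}/(\Gz+z_0)$, so
\[
M(\Gz)=\sqrt{m'(\Gz)}\,G(m(\Gz))
\]
reproduces the right-hand side of \eqref{M upper halfplane} up to an absolute multiplicative constant and an $n=0$ contribution of size $O(\Ge^{2-\Gg(z)})$ in every norm under consideration; this is of strictly smaller order than the leading $\Ge^{\Gg(z)}$, since $\Gg(z)<1$ on the relevant region. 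The norm and point-value estimates for $M$ then transfer from the corresponding estimates for $G$ via the isometry.

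The main technical point will be justifying convergence of the series \eqref{M upper halfplane} in a neighborhood of $\overline{\bb{H}_+}$, together with the uniform-in-$\Ge$ $H^\infty$ bound asserted in the theorem. The geometric series in $\overline{m(z)}m(\Gz)$ converges as soon as $|m(\Gz)|<1/|m(z)|$, and since $1/|m(z)|>1$ the image of this disk under $m^{-1}$ is a region in $\bb{CP}^1$ strictly containing $\overline{\bb{H}_+}$. The extra $\Ge^2+\Gr^{2n}$ in each denominator allows one to majorize the series by a convergent geometric series uniformly in $\Ge$, which delivers the $H^\infty$ bound and the $\Ge$-independent control of the $H^2(\bb{H}_+)$ and $L^2(\GG)$ norms.
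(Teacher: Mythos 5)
Your proof is correct and takes a genuinely different route from the paper's. The paper works entirely inside $H^2(\HH_+)$: it derives the functional identity $\CK w(\zeta)=\frac{ir}{\zeta+i}\,w\!\left(i+\frac{r^2}{\zeta+i}\right)$, iterates the M\"obius map $\sigma(\zeta)=i+r^2/(\zeta+i)$ to pin any singularity of an eigenfunction of $\CK$ at the fixed point $\zeta_0=-i\sqrt{1-r^2}$, and solves the resulting scaling relation to obtain the explicit eigenpairs $w_n(\zeta)=m(\zeta)^n/(\zeta-\zeta_0)$, $\lambda_n\simeq\rho^{2n}$ (Lemma~\ref{lem:eigenstuff}), after which Theorems~\ref{THM main RKHS} and \ref{connections THM} finish the job. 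You instead push the whole problem to the disk via the unitary $f\mapsto\sqrt{(m^{-1})'}\,(f\circ m^{-1})$, which simultaneously maps $H^2(\HH_+)\to H^2(D)$ and $L^2(\GG)\to L^2(\GG_\Gr)$ isometrically, and you invoke Theorem~\ref{THM annulus} with an arbitrary auxiliary inner radius $\rho'<\Gr$ (using that $H^2(D)\subset H^2(A_{\rho'})$ with $\|g\|_{H^2(A_{\rho'})}=\|g\|_{H^2(D)}$, by monotonicity of $r\mapsto\|g\|_{L^2(\GG_r)}$). Since for $|w|>\Gr$ the exponent $\ln|w|/\ln\Gr$ from \eqref{gamma annulus} is independent of the inner radius, this yields the correct $\Gg(z)=\ln|m(z)|/\ln\rho$. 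Your approach is shorter and more structural given that Theorem~\ref{THM annulus} is already proved; what the paper's direct eigenfunction computation buys is a self-contained and instructive demonstration of the general machinery on a second example, and it produces the extremizer without having to track the conformal Jacobian. Two small points worth tightening in a write-up: (a) Remark~\ref{REM stability circle} is a limiting remark, not a proved statement, so it is good that you supply the rigorous substitute (the auxiliary-annulus argument); (b) the uniform-in-$\Ge$ $H^\infty$ bound is not quite a matter of a geometric majorant alone, since the prefactor $\Ge^{2-\Gg(z)}$ must be played against the sum, which is of size $\Ge^{2(\Gb/\Ga-1)}$ by Lemma~\ref{LEM sum asymptotics}; the two exponents combine to a nonnegative power of $\Ge$ precisely because $\Gg(z)<1$ and $|m(\Gz)|\le 1$.
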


\vspace{.1in}

\begin{remark} \label{REM stability in UHP}
When $z$ is inside $\Gamma$ we have complete stability, indeed Cauchy's integral formula implies that

$$|f(z)| \leq c \epsilon$$

\noindent for a constant $c$ independent on $\epsilon$.
\end{remark}

\subsection{The Bernstein ellipse}
Let $E_R$ be the open ellipse with foci at $\pm 1$ and the sum of semi-minor
and semi-major axes equal to $R>1$. The axes lengths of such an ellipse are
therefore $(R\pm R^{-1})/2$. $E_{R}$ is called the
Bernstein ellipse \cite{bern12,tref12}. Its boundary is an image of a circle of radius $R$ centered
at the origin under the Joukowski map $J(z)=(z+z^{-1})/2$.  Let $H^\infty
(E_R)$ be the space of bounded analytic functions in $E_R$, with the usual
supremum norm.

\begin{theorem}\label{THM ellipse}
Let $z \in E_R \backslash [-1,1]$. Then there exists $C>0$, such that for
every $\Ge>0$ and $F \in H^\infty (E_R)$ with
$\|F\|_{H^\infty (E_R)} \leq 1$ and $\|F\|_{L^\infty (-1,1)} \leq \Ge$,
we have
\begin{equation} \label{main bound ellipse}
|F(z)| \leq C \epsilon^{\alpha(z)},
\end{equation}

\noindent where

\begin{equation} \label{alpha}
\alpha(z) = 1 - \frac{\ln   \left| J^{-1}(z)  \right| }{\ln R} \in
(0,1),\qquad
J^{-1}(z) = z + (z-1)\sqrt{\frac{z + 1}{z - 1}}
\end{equation}

\noindent Moreover, \eqref{main bound ellipse} is asymptotically optimal in $\epsilon$ and function attaining the bound is

\begin{equation} \label{M ellipse}
M (\Gz) = \epsilon^{2 - \alpha(z)}  \sum_{n=1}^\infty \frac{(\overline{J^{-1} (z)})^nT_n(\Gz)}{1 + \epsilon^2 R^{2n}},
\end{equation}

\noindent where $T_n$ is the Chebyshev polynomial of degree $n$: $T_n(x) =
\cos (n \cos^{-1} x)$ for $x \in [-1,1]$.

\end{theorem}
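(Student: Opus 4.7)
The plan is to reduce the ellipse problem to Theorem~\ref{THM annulus} via a Joukowski pullback, as previewed in the introduction. Set $\rho = 1/R$ and, for $F \in H^\infty(E_R)$, define
\begin{equation*}
g(\zeta) = F(J(\zeta)), \qquad \zeta \in \{\rho < |\zeta| < R\}.
\end{equation*}
Since $F$ is analytic across $[-1,1]$, the composition $g$ is analytic on the \emph{entire} annulus $\{\rho<|\zeta|<R\}$ (not merely on $A_\rho$), and the identity $J(\zeta) = J(1/\zeta)$ forces the reflection symmetry $g(1/\zeta) = g(\zeta)$. Because $J$ maps $\Gamma_1$ two-to-one onto $[-1,1]$, the hypotheses translate to $\|g\|_{H^\infty(\{\rho<|\zeta|<R\})}\le 1$ and $\|g\|_{L^\infty(\Gamma_1)}\le \epsilon$. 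Choose the branch $\zeta_0 = J^{-1}(z)$ with $|\zeta_0|>1$ given by \eqref{alpha}; then $|F(z)| = |g(\zeta_0)|$.

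For the upper bound, rescale $\tilde\zeta = \zeta/R$: then $\tilde g(\tilde\zeta) = g(R\tilde\zeta)$ is analytic on $A_{\rho^2}$ with data on $\Gamma_\rho$ and extrapolation point $\tilde\zeta_0 = \zeta_0/R \in (\rho,1)$. Theorem~\ref{THM annulus} applied with the pair $(\rho^2,\rho)$ in place of $(\rho,r)$ places $\tilde\zeta_0$ in the outer sub-annulus and yields the exponent
\begin{equation*}
\frac{\ln|\tilde\zeta_0|}{\ln\rho} \;=\; \frac{\ln|\zeta_0|-\ln R}{\ln\rho} \;=\; 1 - \frac{\ln|\zeta_0|}{\ln R} \;=\; \alpha(z).
\end{equation*}
Passing from the $H^2/L^2$ statement to the present $H^\infty/L^\infty$ setting costs only a constant on a bounded annulus and a finite circle, so $|F(z)| \le C\epsilon^{\alpha(z)}$.

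For asymptotic optimality, verify directly that the $M$ in \eqref{M ellipse} satisfies all three requirements. Using the classical identity $T_n(J(w)) = (w^n+w^{-n})/2$, the pullback reads
\begin{equation*}
M(J(w)) \;=\; \tfrac{1}{2}\epsilon^{2-\alpha(z)}\sum_{n=1}^\infty \frac{\bar\zeta_0^{\,n}(w^n+w^{-n})}{1+\epsilon^2 R^{2n}},
\end{equation*}
manifestly invariant under $w\leftrightarrow 1/w$, so $M$ indeed descends to an analytic function on $E_R$. Setting $w=\zeta_0$, the dominant summand $|\zeta_0|^{2n}/(1+\epsilon^2 R^{2n})$ has a saddle at $n^* \sim -\ln\epsilon/\ln R$, where $\epsilon^2 R^{2n^*}\simeq 1$ and $|\zeta_0|^{2n^*}\simeq \epsilon^{2(\alpha(z)-1)}$, producing $|M(z)|\simeq \epsilon^{\alpha(z)}$. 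On $[-1,1]$, $|T_n|\le 1$ and the analogous single-exponent saddle estimate gives $\|M\|_{L^\infty([-1,1])}\lesssim \epsilon$. On $\partial E_R$, $|w|=R$ forces $|T_n(\zeta)|\le R^n$, so splitting the geometric-like series at $n^*$ into two tails shows both are $\lesssim \epsilon^{\alpha(z)-2}$, whence $\|M\|_{H^\infty(E_R)}\lesssim 1$.

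The main conceptual obstacle is that the ellipse problem carries the extra linear constraint $g(\zeta)=g(1/\zeta)$ that Theorem~\ref{THM annulus} does not see, and one must explain why imposing it does not improve the exponent beyond what the unconstrained annulus bound predicts. This is the content of Section~\ref{sub:lincon}: the integral operator $\CK$ associated to the annulus problem leaves the reflection-symmetric subspace invariant, so the constrained extremizer is simply the symmetrization of the unconstrained one — precisely \eqref{M ellipse}. The remaining work consists of the three saddle-point estimates above, which are routine as long as the logarithmic bookkeeping is tracked carefully across the regimes $n<n^*$ and $n>n^*$.
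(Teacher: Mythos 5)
Your proof is correct and follows the same Joukowski reduction that the paper uses, but it streamlines the treatment of the symmetry constraint in a way worth noting. The paper first proves a separate ``annulus with symmetry'' statement (Lemma~\ref{LEM annulus symm}) by invoking the invariant-subspace theory of Section~\ref{sub:lincon}, and only then deduces Theorem~\ref{THM ellipse} from that lemma. You instead observe that for the \emph{upper bound} direction the symmetry constraint is irrelevant: symmetric pullbacks form a subset of $H^2(A_{\rho^2})$, so the unconstrained Theorem~\ref{THM annulus}, applied after rescaling and passing from $H^\infty/L^\infty$ to $H^2/L^2$, already gives the exponent $\alpha(z)$ — and the algebra $\ln|\tilde\zeta_0|/\ln\rho = 1 - \ln|J^{-1}(z)|/\ln R$ checks out. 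For \emph{optimality} you take the stated extremizer \eqref{M ellipse}, note it descends to $E_R$ because its pullback is invariant under $w\leftrightarrow 1/w$, and verify the three estimates ($|M(z)|\simeq\epsilon^{\alpha(z)}$, $\|M\|_{L^\infty[-1,1]}\lesssim\epsilon$, $\|M\|_{H^\infty(E_R)}\lesssim 1$) by the same balancing-at-$n^*$ argument that underlies Lemma~\ref{LEM sum asymptotics}; I checked the exponent bookkeeping in all three and it is right (including the minor point that the real part of $\sum_n \bar\zeta_0^n T_n(\zeta_0)/(1+\epsilon^2 R^{2n})$ is positive and dominated by the $|\zeta_0|^{2n}$ terms, so no cancellation spoils the lower bound). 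This makes Lemma~\ref{LEM annulus symm} dispensable for this theorem, at the cost of treating \eqref{M ellipse} as an ansatz rather than deriving it. Your closing paragraph slightly muddles the logic by saying one ``must explain'' via Section~\ref{sub:lincon} why the symmetry constraint does not improve the exponent: your own saddle-point verification already supplies that explanation, and the invariance of the symmetric subspace under $\CK$ is only needed if one wants to \emph{derive} the extremizer rather than verify it. That is a presentational wrinkle, not a gap.
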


Several remarks are now in order.

\begin{enumerate} 

\item[(i)] $J^{-1}(\zeta)$ is the branch of an inverse of the Joukowski map $J$, that is analytic in the slit ellipse $E_R \backslash [-1,1]$ and satisfies the inequalities $1 < |J^{-1}(\zeta)| <  R$.

\item[(ii)] Chebyshev polynomials $T_{n}$ play the same role in the ellipse as
  monomials $\Gz^{n}$ play in the annulus, i.e. they are the building blocks
  of analytic functions. In fact $J^{-1} \circ T_n \circ J = \zeta^n$.

\item[(iii)] The same bound (\ref{main bound ellipse}) was obtained
in \cite{deto18} when $z\in E_{R}\cap\bb{R}$, where it was shown that the 
bound (up to logarithmic factors) could be attained by a polynomial
\begin{equation}
\label{DT}
g(\zeta) = \Ge T_{K(\Ge)}(\zeta) , \qquad K=K(\Ge) =  \lfloor{\ln (1/\epsilon) / \ln R}\rfloor.
\end{equation}
We observe that the terms in (\ref{M ellipse}) increase exponentially fast
from $n=1$ to $n=K(\Ge)$ and then decrease exponentially fast for
$n>K(\Ge)$. Hence, asymptotically (up to logarithmic factors) we can say that
\[
|M(\Gz)|\approx\epsilon^{2 - \alpha(z)}\frac{\left|J^{-1}(z)\right|^{K(\Ge)}
|T_{K(\Ge)}(\Gz)|}{1 + \epsilon^2 R^{2K(\Ge)}}\approx\Ge |T_{K(\Ge)}(\Gz)|,
\]
in agreement with (\ref{DT}).
\end{enumerate}

\section{Quantifying stability of analytic continuation}
\setcounter{equation}{0}
 \label{SECT abstract theory}

\subsection{Reproducing kernel Hilbert spaces} \label{SECT RKHS}
Our goal is to characterize how large a function $f$ analytic in a domain
$\Omega$ can be at a point $z\in\GO$, provided that it is small on a curve
$\Gamma \Subset \Omega$, relative to its global size in $\GO$. If some norms
$\|f\|_\Gamma$ and $\|f\|_{\PH}$ are used to measure the magnitude of $f$ on
$\GG$ and on $\GO$, respectively, then we are looking at the problem
\begin{equation} \label{abstract max}
\begin{cases}
|f(z)| \to \max
\\
\|f\|_{\PH} \leq 1
\\
\|f\|_\Gamma \leq \epsilon
\end{cases}
\end{equation} 

\noindent Assume that the global norm is induced by an inner product $(\cdot,
\cdot)$ and that the point evaluation functional $f \mapsto f(z)$ is
continuous (for any point $z \in \Omega$), then by the Riesz representation
theorem, there exists an element $p_z \in \PH$ such that $f(z) = (f,p_z)$. Now
inner products with the function $p(\zeta, z):=p_z(\zeta)$ reproduce values of
a function in $\PH$. In this case $\PH$ is called a a reproducing kernel
Hilbert space (RKHS) with kernel $p$. Examples of such spaces include the
Hardy spaces $H^2$ over unit disk, annulus or upper half-plane. From now on we
will drop the subscript $\PH$ for the Hilbert space norm in $\PH$.
\begin{lemma}
  \label{lem:ptaubdd}
Suppose that $\PH$ is a RKHS whose elements are continuous functions on a
metric space $\GO$. Then the function $\GO\ni\tau\mapsto\|p_{\tau}\|$ is
bounded on compact subsets of $\GO$.
\end{lemma}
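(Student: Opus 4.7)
The plan is to reformulate the quantity $\|p_{\tau}\|$ in a way that exposes the role played by the continuity hypothesis on elements of $\PH$, and then invoke a standard functional-analytic tool. By the reproducing property, for every $\tau\in\GO$,
\[
\|p_{\tau}\|^{2}=(p_{\tau},p_{\tau})=p_{\tau}(\tau)=p(\tau,\tau),
\]
so the claim is equivalent to the statement that the diagonal $\tau\mapsto p(\tau,\tau)$ of the reproducing kernel is locally bounded on $\GO$. Equivalently, the family of evaluation functionals $\{L_{\tau}\}_{\tau\in\GO}\subset\PH^{*}$, defined by $L_{\tau}(f)=f(\tau)=(f,p_{\tau})$, has $\|L_{\tau}\|_{\PH^{*}}=\|p_{\tau}\|$, so one needs to show that $\{L_{\tau}\}_{\tau\in K}$ is uniformly bounded for every compact $K\subset\GO$.

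The plan is to invoke the uniform boundedness principle (Banach--Steinhaus) applied to the family $\{L_{\tau}:\tau\in K\}$ of bounded linear functionals on the Hilbert (hence Banach) space $\PH$. The only hypothesis needed beyond the Banach structure is pointwise boundedness: for each fixed $f\in\PH$, one must verify that
\[
\sup_{\tau\in K}|L_{\tau}(f)|=\sup_{\tau\in K}|f(\tau)|<\infty.
\]
This is precisely where the assumption that elements of $\PH$ are continuous functions on the metric space $\GO$ enters: continuity of $f$ together with compactness of $K$ yields the finiteness of $\sup_{\tau\in K}|f(\tau)|$. The Banach--Steinhaus theorem then produces a constant $C_{K}<\infty$ with $\sup_{\tau\in K}\|L_{\tau}\|\le C_{K}$, which is the desired bound $\sup_{\tau\in K}\|p_{\tau}\|\le C_{K}$.

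There is essentially no obstacle in this argument; the only point worth double-checking is that each $L_{\tau}$ is already a member of $\PH^{*}$, which is ensured by the Riesz representative $p_{\tau}$ being an element of $\PH$ (this is built into the definition of a RKHS). A direct approach via Cauchy--Schwarz, $|f(\tau)|\le\|f\|\,\|p_{\tau}\|$, would at best give $\|p_{\tau}\|\ge|f(\tau)|/\|f\|$, i.e.\ pointwise \emph{lower} bounds on $\|p_{\tau}\|$, and would not produce the upper bound without an additional compactness-type argument; this is exactly the gap that Banach--Steinhaus fills. The resulting proof is short and uses only the abstract RKHS structure together with continuity, which is all the statement assumes.
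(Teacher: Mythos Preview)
Your proof is correct. It differs in presentation from the paper's argument, though the underlying ingredient is the same. The paper argues by contradiction: if $\|p_{\tau_k}\|\to\infty$ on a compact $S$, extract a convergent subsequence $\tau_k\to\tau_*$; continuity of every $f\in\PH$ gives $(f,p_{\tau_k})=f(\tau_k)\to f(\tau_*)=(f,p_{\tau_*})$, so $p_{\tau_k}\rightharpoonup p_{\tau_*}$ weakly, and weak convergence forces boundedness of $\|p_{\tau_k}\|$, a contradiction. Your version applies Banach--Steinhaus directly to the whole family $\{L_\tau:\tau\in K\}$, using continuity on the compact set to verify pointwise boundedness. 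Since the boundedness of weakly convergent sequences is itself a corollary of Banach--Steinhaus, the two proofs are logically equivalent; yours is arguably more direct in that it avoids the subsequence extraction and the detour through weak convergence.
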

\begin{proof}
 Assume the contrary. Suppose $S\subset\GO$ is compact, but there exists a
 sequence $\{\tau_k\}_{k=1}^\infty\subset S$, such that $\|p_{\tau_k}\|
 \to \infty$ as $k \to \infty$.  Since $S$ is compact we can extract
 a convergent subsequence (without relabeling it) $\tau_k \to \tau_*$, then
 for any $f \in \PH$ we have $f(\tau_k) = (f, p_{\tau_k}) \to f(\tau_*)=(f,
 p_{\tau_*})$,  by continuity of $f$. Thus, $p_{\tau_k} \weak p_{\tau_*}$ in $\PH$, but this implies boundedness of $\|p_{\tau_k}\|$, leading to a contradiction.
\end{proof}
\begin{corollary}
\label{cor:pbd}
  Under the assumption of Lemma~\ref{lem:ptaubdd} the function $p(\Gz,\tau)$
  is bounded on compact subsets of $\GO\times\GO$, since
$|p(\zeta, \tau)| = |(p_\tau, p_\zeta)|\le\|p_{\tau}\|\|p_{\Gz}\|$.
\end{corollary}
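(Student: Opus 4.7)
The statement of the corollary actually suggests its entire proof via the displayed inequality, so the plan is just to fill in the standard details.

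First, I would unpack the definitions: by construction of the reproducing kernel we have $p(\Gz,\tau)=p_{\tau}(\Gz)$, and the reproducing property applied to the element $p_{\tau}\in\PH$ itself (evaluated at $\Gz$) gives $p_{\tau}(\Gz)=(p_{\tau},p_{\Gz})$. This is the symmetric identity that underlies everything, and it also shows $p(\Gz,\tau)=\overline{p(\tau,\Gz)}$, though we won't need that here.

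Next, I would apply the Cauchy-Schwarz inequality in the Hilbert space $\PH$ to obtain
\[
|p(\Gz,\tau)| \;=\; |(p_{\tau},p_{\Gz})| \;\le\; \|p_{\tau}\|\,\|p_{\Gz}\|.
\]
Now let $K\subset\GO\times\GO$ be compact, and let $\pi_{1},\pi_{2}\colon\GO\times\GO\to\GO$ be the coordinate projections. Since projections are continuous, $\pi_{1}(K)$ and $\pi_{2}(K)$ are compact subsets of $\GO$. By Lemma~\ref{lem:ptaubdd}, $\tau\mapsto\|p_{\tau}\|$ is bounded on $\pi_{2}(K)$ by some constant $C_{2}$, and $\Gz\mapsto\|p_{\Gz}\|$ is bounded on $\pi_{1}(K)$ by some constant $C_{1}$. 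Combining these with the Cauchy-Schwarz estimate gives $|p(\Gz,\tau)|\le C_{1}C_{2}$ for all $(\Gz,\tau)\in K$, proving the claim.

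There is essentially no obstacle here; the content of the corollary is entirely carried by Lemma~\ref{lem:ptaubdd} together with the Cauchy-Schwarz inequality, and the only thing to check is the elementary topological fact that projections of compact sets are compact. The corollary is best viewed as a packaged convenience statement that will be invoked later whenever joint boundedness of the kernel on compact pairs is needed.
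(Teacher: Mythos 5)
Your proof is correct and follows exactly the argument the paper itself compresses into the displayed inequality: the reproducing property gives $p(\Gz,\tau)=(p_\tau,p_\Gz)$, Cauchy--Schwarz bounds this by $\|p_\tau\|\,\|p_\Gz\|$, and Lemma~\ref{lem:ptaubdd} bounds each factor on the (compact) coordinate projections. Spelling out the projection step is a reasonable and harmless elaboration of what the paper leaves implicit.
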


Assume that the smallness on $\Gamma$ is measured in
$L^2:=L^2(\Gamma,|\D\tau|)$-norm (where $|\D \tau|$ is the arc length
measure). Then, 
there is a constant $c>0$ such that
\begin{equation} \label{L^2 norm < H norm first}
\|f\|_{\GG} \leq c \|f\|, \qquad \qquad \forall f \in \PH.
\end{equation} 
Indeed, for all $\tau\in\GG$ we have
$|f(\tau)|=|(f,p_{\tau})|\le\|p_{\tau}\|\|f\|$. Since $\GG$ lies in a compact
subset of $\GO$ and has finite length we conclude by Lemma~\ref{lem:ptaubdd}
that (\ref{L^2 norm < H norm first}) holds.

In order to analyze problem (\ref{abstract max}) we consider a Hermitian
symmetric form 
\[
B:\PH \times \PH \to \mathbb{C},\qquad B(f,g)=(f,g)_{\GG}.
\]
By (\ref{L^2 norm < H norm first}) $B(f,g)$ is continuous, and thus there
exists a positive, self-adjoint and bounded operator $\CK : \PH \to \PH$ with
$B(f,g) = (\CK f, g)$. Moreover we can write an explicit formula for $\CK$ in
terms of the kernel $p$:
\begin{equation} \label{(f,g)=(Kf,g)}
(\CK f, g) =(f, g)_{\GG} = \int_{\Gamma} f(\tau) (p_\tau, g) |\D \tau| = \left( \int_{\Gamma} f(\tau) p_\tau |\D \tau|, g \right).
\end{equation}    
Thus, for every $f\in\PH$
\begin{equation} \label{K RKHS}
(\CK f) (\zeta) = \int_\Gamma  p(\zeta, \tau) f(\tau) |\D \tau|, \qquad \zeta \in \Omega.
\end{equation}
This formula permits to define a new operator $\CK:L^{2}(\GG)\to\PH$. However,
in doing so we may lose injectivity, which underlies uniqueness of analytic
continuation\footnote{It is this property that forces us to restrict attention
to reproducing kernel Hilbert spaces of analytic functions.}. Therefore, we
restrict the domain of $\CK$ to a closed subspace of $L^{2}(\GG)$
\begin{equation} \label{W}
\PW =\text{cl}_{L^2} \left( \PH |_\Gamma \right)\subset
L^{2}(\GG).
\end{equation}
\noindent In fact, in many cases $\PW = L^2(\Gamma)$. The density in the
context of Hardy spaces is known as the Riesz theorem (see
e.g. \cite{part97}). If $\GO$ is bounded it is usually proved using density of
polynomials in $L^{2}(\GG)$, which always holds if all polynomials are in
$\PH$ (and $\Gamma$ is not a closed curve).

We note that the operator $\CK:\PW\to\PH$ is bounded.  Indeed, by
Corollary~\ref{cor:pbd} the function $\GG\ni\tau\mapsto p(\Gz,\tau)$ is
bounded for each $\Gz\in\GO$ and by (\ref{(f,g)=(Kf,g)}) we
have
\begin{equation} \label{K L^2 to H}
\|\CK f\|^2 = (\CK f,\CK f)=(f, \CK f)_{\GG} \leq \|\CK f\|_{\GG} \|f\|_{\GG} \leq c \|\CK f\| \|f\|_{\GG},
\end{equation} 
where we have used (\ref{L^2 norm < H norm first}) in the last inequality. It
follows that $\|\CK f\|\le c\|f\|_{\GG}$.

The outcome of our constructions is the ability to write the two inequalities
in (\ref{abstract max}) as quadratic constraints for $f\in\PH$:
\begin{equation}
  \label{quadconstr}
  (f,f)\le 1,\qquad (\CK f,f)\le\Ge^{2}.
\end{equation}
The final observation is that the objective functional $|f(z)|$ in
(\ref{abstract max}) can be replaced by a (real) linear functional
$\Re(f,p_{z})$. Indeed,
\[
|f(z)|=\sup_{|\Gl|=1}\Re(\Gl f(z))=\sup_{|\Gl|=1}\Re(\Gl f,p_{z}).
\]
It remains to notice that if $f$ satisfies (\ref{quadconstr}) then so does
$\Gl f$ for every $\Gl\in\bb{C}$, $|\Gl|=1$. Thus we arrive at the problem

\begin{equation} \label{maximization problem}
\begin{cases}
\Re (f,p_z) \to \text{max}
\\
(f,f) \leq 1
\\
(\CK f, f) \leq \epsilon^2
\end{cases}
\end{equation}

\begin{lemma} \label{LEM K is compact}
The operator $\CK:\PH \to \PH$ is compact, positive definite and self-adjoint.
\end{lemma}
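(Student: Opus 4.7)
The plan is to verify the three properties in the order: self-adjointness, positive definiteness, compactness, with the last being the only substantive point.

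Self-adjointness is immediate from the defining identity $(\CK f,g)=(f,g)_{\GG}$ derived in \eqref{(f,g)=(Kf,g)}: the sesquilinear form $B(f,g)=(f,g)_{\GG}$ on $\PH\times\PH$ is Hermitian symmetric, so its representing operator $\CK$ satisfies $(\CK f,g)=(f,\CK g)$ for all $f,g\in\PH$. For positive definiteness, I would observe that $(\CK f,f)=(f,f)_{\GG}=\|f\|_{\GG}^{2}\ge 0$, with equality forcing $f\equiv 0$ on $\GG$; by analyticity in $\GO$ and the fact that $\GG$ has an accumulation point in $\GO$, the unique continuation property for analytic functions then gives $f\equiv 0$ in $\GO$, hence $f=0$ in $\PH$.

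For compactness, I would factor $\CK=\CR^{*}\CR$, where $\CR:\PH\to L^{2}(\GG)$ is the restriction operator $\CR f=f|_{\GG}$, which is bounded by \eqref{L^2 norm < H norm first}. It suffices to prove $\CR$ is compact, since then $\CR^{*}\CR$ is automatically compact. The key step is to show that every bounded sequence $\{f_{n}\}\subset\PH$ admits a subsequence for which $\CR f_{n}$ converges in $L^{2}(\GG)$. Extract a weakly convergent subsequence (still denoted $f_{n}$) with $f_{n}\weak f$ in $\PH$. By the reproducing property, for every $\tau\in\GG$,
\[
f_{n}(\tau)=(f_{n},p_{\tau})\longrightarrow(f,p_{\tau})=f(\tau),
\]
so we have pointwise convergence on $\GG$. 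Moreover, $|f_{n}(\tau)|\le\|f_{n}\|\,\|p_{\tau}\|$, and since $\GG$ lies in a compact subset of $\GO$, Lemma~\ref{lem:ptaubdd} supplies a uniform bound $\|p_{\tau}\|\le C$ for $\tau\in\GG$. Combined with the uniform $\PH$-bound on $f_{n}$, this yields a uniform pointwise bound $|f_{n}(\tau)|\le M$ on $\GG$, which is integrable against $|\D\tau|$ since $\GG$ has finite length. The dominated convergence theorem then gives $\|f_{n}-f\|_{\GG}\to 0$, i.e.\ $\CR f_{n}\to\CR f$ in $L^{2}(\GG)$.

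The only conceptually nontrivial step is the compactness argument, and within that the main ingredient is not any hard analysis but rather the identification of the correct dominating function on $\GG$, provided by the uniform bound on $\|p_{\tau}\|$ from Lemma~\ref{lem:ptaubdd}. Everything else — self-adjointness, nonnegativity, and the strict-positivity upgrade via analytic continuation — is essentially formal once the reproducing kernel framework and identity \eqref{(f,g)=(Kf,g)} are in place.
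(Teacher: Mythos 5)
Your proof is correct and follows essentially the same route as the paper: the heart of the compactness argument in both cases is to extract a weakly convergent subsequence, use the reproducing property and Lemma~\ref{lem:ptaubdd} to get pointwise convergence on $\GG$ together with a uniform bound, and then deduce $L^{2}(\GG)$ convergence by dominated convergence. The only cosmetic difference is in the final step: you package the conclusion as ``$\CR$ is compact, hence $\CK=\CR^{*}\CR$ is compact,'' whereas the paper finishes directly by applying the estimate $\|\CK(f_{k}-f)\|\le c\,\|f_{k}-f\|_{\GG}$ from \eqref{K L^2 to H}; these are equivalent. One small thing you add that the paper does not do at this point: you actually prove strict positive definiteness (trivial kernel) via unique continuation of analytic functions along $\GG$, whereas the paper's proof only asserts ``positivity'' as immediate and establishes the trivial null-space separately in Section~\ref{SEC K from exp decay to power law}. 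Your inclusion is correct and makes the proof of the lemma's full claim self-contained.
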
 

\begin{proof}
  \noindent Self-adjointness and positivity of $\CK$ on $\PH$ are immediate
  consequences of \eqref{(f,g)=(Kf,g)}. To prove compactness, let
  $\{f_k\}_{k=1}^\infty \subset \PH$ be a bounded sequence. Extract a weakly
  convergent subsequence (without relabeling it) $f_k \rightharpoonup f$. Then
  for every $\tau\in\GO$ we have $f_k(\tau)=(f_{k},p_{\tau}) \to
  (f,p_{\tau})=f(\tau)$. In addition, for every $\tau\in\GG$ we have
  $|f_{k}(\tau)|=|(f_{k},p_{\tau})|\le\|f_{k}\|\|p_{\tau}\|$. The sequence
  $\|f_{k}\|$ is bounded, since $f_{k}$ is weakly convergent, while
  $\|p_{\tau}\|$ is bounded on $\GG$ by Lemma~\ref{lem:ptaubdd}. Thus,
  $f_k(\tau)$ is uniformly bounded on $\GG$. Then $f_{k}|_\GG\to f|_\GG$ in
  the $L^{2}$ norm. But then by the estimate $\|\CK (f_k-f)\| \le c\|f_k-f\|_{\GG}$ (see
  \eqref{K L^2 to H}) we conclude that $\CK f_{k}\to\CK f$ in $\PH$.

\end{proof}

\begin{theorem} \label{THM main RKHS}
Let $\PH=\pazocal{H}(\Omega)$ be a RKHS of functions analytic in domain
$\Omega$, with kernel $p$ and norm $\|\cdot\|$. Let $\Gamma \Subset \Omega$ be
a rectifiable curve of finite length and $\|\cdot\|_\Gamma$ be the $L^2:= L^2(\Gamma, |\D \tau|)$ norm. Fix a point $z \in \Omega \backslash cl(\Gamma)$ and assume $f \in \PH$ with $\|f\| \leq 1$ and $\|f\|_{\GG} \leq \epsilon$, then

\begin{equation} \label{main bound RKHS}
|f(z)| \leq \frac{3}{2} u_{\epsilon, z}(z)\min\left\{\frac{1}{\|u_{\epsilon, z}\|},
\frac{\Ge}{\|u_{\epsilon, z}\|_{\Gamma}}\right\}
\end{equation}

\noindent where $u_{\epsilon, z}\in\PH$ is the unique solution of

\begin{equation} \label{u RKHS}
\CK u + \epsilon^2 u = p_z.
\end{equation}

\noindent Moreover, \eqref{main bound RKHS} is optimal since it is attained (up to
the factor $3/2$) by

\begin{equation} \label{Mepsz}
  M_{\epsilon, z} (\zeta)= u_{\epsilon, z}(\zeta)\min\left\{\frac{1}{\|u_{\epsilon, z}\|},
\frac{\Ge}{\|u_{\epsilon, z}\|_{\Gamma}}\right\}.
\end{equation}
\end{theorem}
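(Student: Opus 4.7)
Problem~(\ref{maximization problem}) is the maximization of a real linear functional subject to two quadratic constraints, so the natural strategy is Lagrange multipliers. The stationarity condition for the Lagrangian $\Re(f,p_{z}) - \lambda (f,f) - \mu(\CK f,f)$ reads $(\mu\CK + \lambda I)f = \tfrac{1}{2}p_{z}$, which up to normalization is precisely equation~(\ref{u RKHS}) when $\lambda/\mu = \Ge^{2}$. This explains why the extremizer should be a scalar multiple of $u_{\Ge,z}$, and why the particular value $\alpha = \Ge^{2}$ (corresponding to both constraints being active simultaneously) is the correct one. My plan is to implement this heuristic directly in three steps: (i) existence and basic identities for $u_{\Ge, z}$; (ii) the upper bound via a single key identity; (iii) admissibility and evaluation of the candidate $M_{\Ge, z}$.

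For (i), Lemma~\ref{LEM K is compact} says $\CK$ is positive, self-adjoint and compact on $\PH$, so $\CK + \Ge^{2}I$ has spectrum in $[\Ge^{2},\infty)$ and is invertible, giving a unique $u_{\Ge,z}\in\PH$. Pairing~(\ref{u RKHS}) against $u_{\Ge,z}$ itself and using $(\CK u,u) = (u,u)_{\GG} = \|u\|_{\GG}^{2}$ from~(\ref{(f,g)=(Kf,g)}) produces the fundamental identity
\begin{equation}
u_{\Ge, z}(z) \;=\; (u_{\Ge, z}, p_{z}) \;=\; \|u_{\Ge, z}\|_{\GG}^{2} + \Ge^{2}\|u_{\Ge, z}\|^{2},
\end{equation}
which in particular shows that $u_{\Ge,z}(z)$ is real and strictly positive (it is nonzero since $p_z\neq 0$).

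For (ii), the key manipulation is to substitute equation~(\ref{u RKHS}) into the reproducing identity and exploit the self-adjointness of $\CK$ together with $(\CK f,g)=(f,g)_{\GG}$:
\[
(f,p_{z}) \;=\; (f, \CK u_{\Ge, z}) + \Ge^{2}(f, u_{\Ge, z}) \;=\; (f, u_{\Ge, z})_{\GG} + \Ge^{2}(f, u_{\Ge, z}).
\]
Applying Cauchy--Schwarz in $L^{2}(\GG)$ to the first term and in $\PH$ to the second, and invoking the two constraints $\|f\|_{\GG}\le\Ge$ and $\|f\|\le 1$, I obtain
\[
|f(z)| \;\le\; \Ge\,\|u_{\Ge, z}\|_{\GG} + \Ge^{2}\|u_{\Ge, z}\|.
\]
It then remains to compare this against $\tfrac{3}{2}\kappa\, u_{\Ge, z}(z)$ with $\kappa=\min\{1/\|u_{\Ge,z}\|,\ \Ge/\|u_{\Ge,z}\|_{\GG}\}$. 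Splitting into the two cases according to which expression achieves the minimum and using the identity for $u_{\Ge,z}(z)$ from step~(i), the claim reduces in both cases to the elementary inequality $1+t \le \tfrac{3}{2}(1+t^{2})$ for $t\in[0,1]$, which holds because the discriminant of $3t^{2}-2t+1$ is negative.

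For (iii), admissibility of $M_{\Ge, z} = \kappa\, u_{\Ge, z}$ is essentially automatic: exactly one of the two constraints $\|M_{\Ge,z}\|\le 1$ and $\|M_{\Ge,z}\|_{\GG}\le\Ge$ is saturated by design (the one whose reciprocal of the relevant norm defines $\kappa$), and the other is then satisfied by the very hypothesis that put $\kappa$ at that value. Since $u_{\Ge, z}(z) > 0$ is real, $|M_{\Ge, z}(z)| = \kappa\, u_{\Ge, z}(z)$, matching the upper bound up to the factor $\tfrac{3}{2}$. I do not expect any serious technical obstacle here; the only step that requires insight rather than computation is the substitution in~(ii), where the Fredholm equation~(\ref{u RKHS}) is used to \emph{decouple} the point evaluation $(f,p_{z})$ into one term controlled by $\|f\|_{\GG}$ and one by $\|f\|$, so that both constraints on $f$ can be brought to bear simultaneously.
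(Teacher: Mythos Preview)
Your proof is correct, but the route to the upper bound differs from the paper's. The paper works in the Lagrangian/convex-duality framework: for any $\mu>0,\nu\ge 0$ one has the quadratic bound
\[
\Re(f,p_{z})\;\le\;\tfrac12\bigl((\mu+\nu\CK)^{-1}p_{z},p_{z}\bigr)+\tfrac12(\mu+\nu\Ge^{2}),
\]
and then specializes to $\mu/\nu=\Ge^{2}$, obtaining \emph{two} separate bounds (one from each formula for $\nu$) that are then simplified via the identity $u(z)=\|u\|_{\GG}^{2}+\Ge^{2}\|u\|^{2}$. You instead bypass the duality step entirely: substituting (\ref{u RKHS}) directly into $(f,p_{z})$ and applying Cauchy--Schwarz in the two Hilbert spaces yields the single estimate $|f(z)|\le \Ge\|u\|_{\GG}+\Ge^{2}\|u\|$, after which the elementary inequality $1+t\le\tfrac32(1+t^{2})$ for $t\in[0,1]$ converts it into (\ref{main bound RKHS}). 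Your argument is shorter and more self-contained; it also avoids the paper's separate discussion of the trivial case $p_{z}\in\CK(\PW)$, since your substitution step is valid regardless. What the paper's approach buys is transparency about the optimization structure (KKT conditions, complementary slackness, why the ratio $\mu/\nu=\Ge^{2}$ is natural), and a general family of bounds parametrized by $\eta=\mu/\nu$ that could in principle be tuned. Both arguments hinge on the same identity $u(z)=\|u\|_{\GG}^{2}+\Ge^{2}\|u\|^{2}$ and on the same candidate extremizer $M_{\Ge,z}$.
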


Before we prove this theorem several remarks need to be made.
\begin{enumerate}
\item An obvious thing to do is to set $\Ge=0$ in (\ref{u RKHS}). If
  $p_{z}\in\CK(\PW)$, where $\PW$ is given by \eqref{W}, then $u_{\Ge,z}\to u_{0}=\CK^{-1}p_{z}$, as $\Ge\to 0$. In
  which case the upper bound (\ref{main bound RKHS}) is simply 
\begin{equation} \label{complete stability}
|f(z)| \leq C \epsilon,\qquad C=\frac{3u_{0}(z)}{2\|u_{0}\|_{\GG}}.
\end{equation}
In other words we have numerically stable analytic continuation. Examples
where this happens are mentioned in Remarks~\ref{REM stability circle} and
\ref{REM stability in UHP}. This case will be referred to as the trivial case.
\item The function on the \rhs\ of (\ref{Mepsz}) is obviously in $\PH$ and
  obviously satisfies the constraints in (\ref{abstract max}). Hence, the
  attainability of the bound (\ref{main bound RKHS}) is trivial. Only the
  bound itself requires a proof.
\item The upper bound (\ref{main bound RKHS}) is not an explicit function of
  $\Ge$ and $z$. Its asymptotics as $\Ge\to 0$ depends on fine properties of
  the operator $\CK$. This will be discussed in Section~\ref{SEC K from exp
    decay to power law}. In specific examples in
  Section~\ref{sec:applications} equation (\ref{u RKHS}) is solved explicitly
  and the power law behavior $M_{\Ge,z}(z)\sim \Ge^{\Gg(z)}$ is exhibited.
  
\item The precise asymptotics of the exponential decay of eigenvalues of $\CK$
  is known for certain classes of spaces. For example, assume $\PH$ coincides
  with the Smirnov class $E^2(\Omega)$ \cite{duren}. If the domain $\Omega$ is
  bounded and simply connected and $\Gamma \Subset \Omega$ is a closed Jordan
  rectifiable curve of class $C^{1+\epsilon}$ for $\epsilon>0$, with $\Omega'$
  denoting the domain bounded by it, then the eigenvalues of $\CK$ satisfy the
  asymptotic relation \cite{parfenov}

\begin{equation} \label{parfenov}
\lambda_n(\CK) \sim \rho^{2n+1}, \qquad \qquad \text{as} \ n \to +\infty,
\end{equation}

\noindent where $\rho<1$ is the Riemann invariant, whereby the domain $\Omega \backslash cl(\Omega')$ is conformally equivalent to the annulus $\{\omega \in \mathbb{C} : \rho < |\omega| < 1\}$. 
\end{enumerate}

\noindent The proof of Theorem~\ref{THM main RKHS} in the more general context of RKHS
follows without much change from the proof of the same theorem for the Hardy space
$H^{2}$ of analytic functions in the upper half-plane given in
\cite{grho-gen}. For the sake of completeness we give a short recap of the
argument.

\subsection{Proof of Theorem~\ref{THM main RKHS}} \label{SECT maximizing the error}
We start by analyzing the trivial case.
\begin{lemma} \label{LEM complete stability}
Assume the setting of Theorem~\ref{THM main RKHS}, let $p_z \in \CK(\PW)$, then

\begin{equation*}
|f(z)| \leq c \epsilon.
\end{equation*}

\end{lemma}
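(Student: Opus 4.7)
The plan is to exploit the hypothesis $p_{z}\in\CK(\PW)$ in order to rewrite the point evaluation $f(z)$ as an $L^{2}(\GG)$ inner product against a fixed function, and then conclude immediately by Cauchy--Schwarz. First, I would pick $w\in\PW$ with $\CK w=p_{z}$; such a $w$ exists by hypothesis and is unique, since if $\CK w=0$ with $w\in\PW$, then $(w,g)_{\GG}=(\CK w,g)_{\PH}=0$ for every $g\in\PH$, meaning $w$ is orthogonal in $L^{2}(\GG)$ to $\PH|_{\GG}$, hence to its closure $\PW$, forcing $w=0$.

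The key identity is an extension of \eqref{(f,g)=(Kf,g)} to the larger domain: for every $w\in L^{2}(\GG)$ and every $f\in\PH$,
\[
(\CK w,f)_{\PH}=(w,f)_{\GG}.
\]
I would establish this by interpreting \eqref{K RKHS} as the $\PH$-valued Bochner integral $\CK w=\int_{\GG}p_{\tau}\,w(\tau)|\D\tau|$, which converges absolutely since $\tau\mapsto\|p_{\tau}\|$ is bounded on $\GG$ by Lemma~\ref{lem:ptaubdd} and $\GG$ has finite length. Pulling the continuous functional $(\cdot,f)_{\PH}$ inside the integral and using $(p_{\tau},f)_{\PH}=\overline{f(\tau)}$ yields the identity. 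Granted this, the reproducing property gives
\[
f(z)=(f,p_{z})_{\PH}=(f,\CK w)_{\PH}=\overline{(\CK w,f)_{\PH}}=\overline{(w,f)_{\GG}}=(f,w)_{\GG}.
\]

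By Cauchy--Schwarz in $L^{2}(\GG)$ together with the hypothesis $\|f\|_{\GG}\le\Ge$,
\[
|f(z)|\le\|w\|_{\GG}\,\|f\|_{\GG}\le c\Ge,\qquad c=\|w\|_{\GG},
\]
with $c$ independent of $f$ and of $\Ge$, which is exactly the claim. There is essentially no obstacle in this proof; it is a short duality calculation. The only item requiring care is the Bochner-integral interchange used to extend \eqref{(f,g)=(Kf,g)}, and crucially one need not invoke equation \eqref{u RKHS} at all in this trivial regime. The explicit constant $3u_{0}(z)/(2\|u_{0}\|_{\GG})$ from the first remark following Theorem~\ref{THM main RKHS} would then be recovered a posteriori as the $\Ge\to 0$ limit of the bound \eqref{main bound RKHS}, but it is not needed for the lemma itself.
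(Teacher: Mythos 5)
Your proof is correct and follows the same route as the paper's: choose $w\in\PW$ with $\CK w=p_z$, rewrite $f(z)=(f,w)_{\GG}$ using the identity $(\CK w,f)=(w,f)_{\GG}$, and finish with Cauchy--Schwarz. The only difference is that you spell out (via the Bochner-integral reading of \eqref{K RKHS}) why that identity extends from $\PH\times\PH$ to $L^2(\GG)\times\PH$, and you note uniqueness of $w$ -- both are reasonable clarifications of details the paper leaves implicit, but not new ideas.
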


\begin{proof}
Let $v \in \PW \subset L^2$ satisfy $\CK v = p_z$, (note that $v$ does not depend on $\epsilon$), then using \eqref{(f,g)=(Kf,g)} we have

\begin{equation*}
f(z) = (f, p_z) = (f, \CK v) = (f,v)_{\GG}.
\end{equation*}

\noindent It remains to use the Cauchy-Schwartz inequality to conclude the desired inequality with $c = \|v\|_{\GG}$.

\end{proof}

\noindent Let us now turn to the case $p_z \notin \CK(\PW)$.
For every $f$, satisfying \eqref{quadconstr} and for every nonnegative numbers $\mu$ and $\nu$ ($\mu^{2}+\nu^{2}\not=0$) we have the inequality
\begin{equation}
  \label{Lagrange}
  ((\mu + \nu \CK)f, f)\le\mu+\nu \epsilon^{2}.
\end{equation}
Applying convex duality to the quadratic functional on the \lhs\ of
(\ref{quadconstr}) we get
\begin{equation}\label{cxdual}
\Re(f, p_z)-\frac{1}{2} \left((\mu+\nu \CK)^{-1} p_z, p_z \right)\le
\frac{1}{2} \left( (\mu+\nu \CK)f,f \right)\le
\frac{1}{2} \left( \mu+\nu \epsilon^{2}  \right),
\end{equation}

\noindent so that

\begin{equation} \label{maxub}
\Re(f, p_z) \le \frac{1}{2} \left((\mu+\nu \CK)^{-1} p_z, p_z \right)
+\frac{1}{2} \left( \mu+\nu \epsilon^{2}  \right),
\end{equation}

\noindent which is valid for every $f$, satisfying \eqref{quadconstr} and all $\mu > 0$, $\nu\ge
0$. In order for the bound to be optimal we must have equality in
(\ref{cxdual}), which holds if and only if
\[
p_z = (\mu + \nu \CK) f,
\]
giving the formula for optimal vector $f$:

\begin{equation} \label{maxxi}
  f=(\mu + \nu \CK)^{-1} p_z.
\end{equation}
The goal is to choose the Lagrange multipliers $\mu$ and $\nu$ so that the
constraints in (\ref{maximization problem}) are satisfied by $f$, given by (\ref{maxxi}).

\vspace{.1in}

\noindent $\bullet$ if $\nu = 0$, then $f = \frac{p_z}{\mu}$ and optimality implies that the first inequality constraint of \eqref{maximization problem} must be attained, i.e. $\|f\|=1$. Thus, $f = \frac{p_z}{\|p_z\|}$ does not depend on the small parameter $\epsilon$, which leads to a contradiction, because the second constraint $(\CK f,f) \leq \epsilon^2$ is violated if $\epsilon$ is small enough. 

\vspace{.1in}

\noindent $\bullet$ if $\mu= 0$, then $\CK f = \frac{1}{\nu}p_z$. But this
equation has no solutions in $\PH$ according to the assumption $p_z \notin \CK(\PW)$.

\vspace*{.1in}

Thus we are looking for $\mu>0,\  \nu> 0$, so that equalities in
\eqref{maximization problem} hold. (These are the complementary slackness relations in Karush-Kuhn-Tucker conditions.), i.e.

\begin{equation}\label{cxdeq}
\begin{cases}
\left( (\mu + \nu \CK)^{-1} p_z, (\mu + \nu \CK)^{-1} p_z \right) =1, \\
\left( \CK (\mu + \nu \CK)^{-1} p_z, (\mu + \nu \CK)^{-1} p_z \right)= \epsilon^2.
\end{cases}
\end{equation}

\noindent Let $\eta = \frac{\mu}{\nu}$, we can solve either the first or the
second equation in \eqref{cxdeq} for $\nu$

\begin{equation} \label{nu1}
  \nu^{2} = \|(\CK+\eta)^{-1} p_z\|^{2},
\end{equation}
or
\begin{equation} \label{nu2}
  \nu^{2}=\epsilon^{-2}\left( \CK (\eta + \CK)^{-1} p_z, (\eta + \CK)^{-1} p_z \right).
\end{equation}

\noindent The two analysis paths stemming from using one or the other representation for $\nu$ lead to two versions of the upper bound on $|f(z)|$, 
optimality of neither we can prove. However, the minimum of the two upper
bounds is still an upper bound and its optimality is then apparent. At first
glance both expressions for $\nu$ should be equivalent and not lead
to different bounds. Indeed, their equivalence can be stated as an equation

\begin{equation}\label{Phi(eta) def and equation}
\Phi(\eta):= \frac{ \left( \CK (\CK + \eta)^{-1} p_z, (\CK + \eta)^{-1} p_z \right) }
{ \|(\CK+\eta)^{-1} p_z\|^2 } = \epsilon^{2}
\end{equation}

\noindent for $\eta$. Equation \eqref{Phi(eta) def and equation} has a unique
solution $\eta_{*}=\eta_{*}(\Ge) > 0$, because $\Phi(\eta)$ is monotone
increasing (since its derivative can be shown to be positive),
$\Phi(+\infty)=(\CK p_{z},p_{z})/\|p_{z}\|^{2}$ and $\Phi(0^{+})=0$. (See
\cite{grho-gen} for technical details.)

In the examples in this paper the eigenvalues and eigenfunctions of $\CK$
exhibit exponential decay. We have shown in \cite{grho-gen} that such behavior
implies that $\eta_{*}(\Ge)\simeq\Ge^{2}$, as $\Ge\to 0$. However, \emph{any}
choice of $\eta$ gives two valid upper bounds: one via (\ref{nu1}), the other,
via (\ref{nu2}). In the anticipation that the exponential decay of eigenvalues
and eigenfunctions holds we simply set $\eta=\Ge^{2}$ and obtain,
setting $u=(\CK+\Ge^{2})^{-1}p_z$,
\begin{equation*}
\Re(f,p_z)\le \frac{(u,p_z)}{2\|u\|}+\Ge^{2}\|u\|,
\qquad \qquad
\Re(f,p_z)\le \frac{\epsilon (u,p_z)}{2\|u\|_{\Gamma}}+\Ge\|u\|_{\Gamma}
\end{equation*}

\noindent Definition of $u$ implies $u(z) = (u, p_z) = (u, \CK u + \epsilon^2 u) = (u, \CK u) + \epsilon^2 (u,u)$, i.e.

\begin{equation} \label{u(z) and the norms}
u(z) = \|u\|_{\GG}^2 + \epsilon^2 \|u\|^2,
\end{equation}

\noindent which implies the inequalities

\[
\Ge^{2}\|u\|\le\frac{u(z)}{\|u\|},\qquad
\|u\|_{L^{2}}\le\frac{u(z)}{\|u\|_{\Gamma}}.
\]
Therefore, we have both
\[
|f(z)|=\Re(f,p_z)\le\frac{3}{2}\frac{u(z)}{\|u\|},
\qquad \qquad
|f(z)|\le\frac{3\Ge}{2}\frac{u(z)}{\|u\|_{\Gamma}}.
\]
Inequality \eqref{main bound RKHS} is now proved. 
We remark that a possibly suboptimal choice $\eta=\Ge^{2}$ still
delivers asymptotically optimal upper bound (\ref{main bound RKHS}), since it
is attained by the function (\ref{Mepsz}).

\subsection{Solving the integral equation} 
\label{SEC K from exp decay to power law}
We begin by making several observations about a priori properties of the
solution $u_{\Ge}$ of (\ref{u RKHS}) in the non-trivial case
$p_{z}\not\in\CK(\PW)$. The most immediate consequence of the non-triviality
is that $\|u_{\Ge}\|_{\GG}$ blows up as $\Ge\to 0$. If it did not, we would be
able to extract a weakly convergent subsequence $u_{\Ge_{k}}\weak u_{0}\in
\PW$ and passing to the weak limits in (\ref{u RKHS}) obtained that $(\CK
u_{0})(\Gz)=p_{z}(\Gz)$, for $\Gz\in\GG$. However, since
$\CK(\PW)\subset\PH$ we get a contradiction with the non-triviality.

Next let us show that equation (\ref{u(z) and the norms}) implies that
$M_{\Ge,z}(z)\gg\Ge$. On the one hand, dividing equation (\ref{u(z) and the norms}) by
$\|u_{\Ge}\|_{\GG}$ we obtain
\[
\frac{u_{\Ge}(z)}{\|u_{\Ge}\|_{\GG}}\ge\|u_{\Ge}\|_{\GG}.
\]
On the other, we have $\|u_{\Ge}\|_{\GG}^2 + \epsilon^2
\|u_{\Ge}\|^2\ge2\epsilon\|u_{\Ge}\|_{\GG} \|u_{\Ge}\|$ and therefore
\[
\frac{u_{\Ge}(z)}{\Ge\|u_{\Ge}\|}\ge2\|u_{\Ge}\|_{\GG},
\]
proving that $\Ge^{-1}M_{\Ge,z}(z)\ge\|u_{\Ge}\|_{\GG}\to+\infty$. This means
that one cannot expect full numerical stability of analytic continuation. 

Finally, we prove the ``mathematical well-posedness'' of analytic
continuation: $M_{\Ge,z}(z)\to 0$ as $\Ge\to 0$.  This is a consequence of the
weak convergence of $u_{\Ge}/\|u_{\Ge}\|$ to 0. If we divide (\ref{u RKHS}) by
$\|u_{\Ge}\|$ and pass to weak limits, using the fact that $\|u_{\Ge}\|\ge
c^{-1}\|u_{\Ge}\|_{\GG}\to+\infty$ we obtain that the weak limit $\Hat{u}$ of
$u_{\Ge}/\|u_{\Ge}\|$ satisfies $\CK\Hat{u}=0$.  But if $\CK \Hat{u}=0$, then
$\|\Hat{u}\|_{\GG}^{2}=(\CK \Hat{u},\Hat{u})=0$. It follows that the analytic
function $\Hat{u}=0$ on $\GG$ and hence must vanish everywhere in $\GO$. This
shows that the operator $\CK$ has a trivial null-space and that
$M_{\Ge,z}(z)=(u_{\Ge}/\|u_{\Ge}\|,p_{z})\to 0$, as $\Ge\to 0$.

A consequence of the just established strict positivity of $\CK$ is
separability of the Hilbert space $\PH$. This should not be surprising, since
$\PH$ consists of analytic functions each of which can be completely described by
a countable set of numbers.
\begin{lemma}
The Hilbert space $\PH$ is always separable.
\end{lemma}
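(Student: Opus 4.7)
The plan is to deduce separability directly from the spectral theorem for compact self-adjoint operators, applied to $\CK$. By Lemma~\ref{LEM K is compact} the operator $\CK:\PH\to\PH$ is compact, positive, and self-adjoint, and the paragraph immediately preceding this lemma statement already establishes that $\ker\CK=\{0\}$: any $\Hat{u}\in\PH$ with $\CK\Hat{u}=0$ satisfies $\|\Hat{u}\|_{\GG}^{2}=(\CK\Hat{u},\Hat{u})=0$, so the analytic function $\Hat{u}$ vanishes on $\GG$ and hence, by unique continuation, vanishes identically on $\GO$.

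The spectral theorem for compact self-adjoint operators then produces an at most countable orthonormal family $\{e_{n}\}$ of eigenvectors of $\CK$ corresponding to the nonzero eigenvalues $\{\Gl_{n}\}$, where each eigenspace is finite-dimensional and $\Gl_{n}\to 0$. The closed linear span of $\{e_{n}\}$ equals $(\ker\CK)^{\perp}$, which coincides with $\PH$ since $\ker\CK=\{0\}$. Thus $\{e_{n}\}$ is a countable orthonormal basis of $\PH$, and the set of finite $(\bb{Q}+i\bb{Q})$-linear combinations of its elements is a countable dense subset, proving separability.

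I do not foresee any genuine obstacle: every ingredient has already been assembled in the preceding lemmas and in the discussion that motivates this lemma. The statement is effectively a one-line corollary of the spectral theorem combined with the triviality of $\ker\CK$, and the only place where the RKHS/analyticity hypothesis enters is the unique continuation step used to pass from vanishing on $\GG$ to vanishing in all of $\GO$.
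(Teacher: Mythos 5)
Your proof is correct and follows essentially the same route as the paper: invoke compactness, self-adjointness, and positivity of $\CK$ from Lemma~\ref{LEM K is compact}, use the already-established triviality of $\ker\CK$, and apply the spectral theorem to obtain a countable orthonormal eigenbasis of $\PH$.
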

\begin{proof}
  We saw that $\CK : \PH \to \PH$ given by \eqref{K RKHS} is a self-adjoint,
  compact operator. We have just seen that $\CK$ has a trivial null-space.  In
  this case the Hilbert space $\PH$ is the orthogonal sum of countable number
  of finite dimensional eigenspaces of $\CK$ with positive eigenvalues. Thus,
  $\PH$ has a countable complete orthonormal set and is therefore separable.
\end{proof}

In applications of our theory in Section~\ref{sec:applications} we solve
equation \eqref{u RKHS} exactly by finding all eigenvalues and
eigenfunctions of $\CK$.  Let $\{e_n\}_{n =1}^\infty$ be an orthonormal
eigenbasis of $\PH$ with $\CK e_n = \lambda_n e_n$. In this basis the equation
\eqref{u RKHS} diagonalizes:
\begin{equation*}
\Gl_{n} (u,e_{n}) + \Ge^{2} (u,e_{n}) = (p_z,e_{n}),
\end{equation*}
therefore we find
\begin{equation} \label{u in basis}
  u_{\Ge}(\zeta) = \sum_{n} \frac{\overline{e_n(z)}}{\Gl_{n}+\Ge^{2}} e_n(\zeta).
\end{equation}
Using this expansion, formula $\|u\|_{\GG}^2 = (\CK u, u)$, and
(\ref{u(z) and the norms}) we find that
\begin{equation} \label{u(z) and norms of u in series}
u_{\Ge}(z) = \sum_{n} \frac{|e_n(z)|^2}{\lambda_n + \epsilon^2},
\qquad
\|u_{\Ge}\|^2 = \sum_{n} \frac{|e_n(z)|^2}{(\lambda_n + \epsilon^2)^2},
\qquad
\|u_{\Ge}\|_{\GG}^2 = \sum_{n} \frac{\lambda_n |e_n(z)|^2}{(\lambda_n + \epsilon^2)^2}.
\end{equation}
It follows that
\begin{equation} \label{a}
\sum_{n} \frac{|e_n(z)|^2}{\lambda_n} = \infty,
\end{equation}
since if the series had a finite sum then formula (\ref{u(z) and norms
    of u in series}) for $\|u_{\Ge}\|_{\GG}$ would imply
\[
\|u_{\Ge}\|_{\GG}^2\le\sum_{n} \frac{|e_n(z)|^2}{\lambda_n},
\]
contradicting the blow up of $\|u_{\Ge}\|_{\GG}$. 

In our examples where the eigenvalues $\Gl_{n}$ and eigenfunctions
$e_{n}(\Gz)$ can be found explicitly they are seen to decay exponentially fast
to 0 (see also \eqref{parfenov}). As we have shown in \cite{grho-gen} this implies the power law principle
\begin{equation} \label{powerlaw}
 M_{\epsilon, z} (z) \simeq  \epsilon^{\gamma(z)}, \qquad \text{as} \qquad \epsilon \to 0,
\end{equation}
where $\gamma(z) \in (0,1)$ can be expressed in terms of the rates of
exponential decay of spectral data for $\CK$.

\begin{theorem} \label{connections THM}
Let $\{e_n\}_{n=1}^\infty$ be orthonormal eigenbasis of $\PH$ with $\CK e_n = \lambda_n e_n$. Let $u = u_{\epsilon,z}$ and $M_{\epsilon,z}$ be given by \eqref{u RKHS} and \eqref{Mepsz} respectively. Assume

\begin{equation} \label{lambda_n e_n exp decay}
\lambda_n \simeq e^{-\alpha n}, \qquad |e_n(z)|^2 \simeq e^{-\beta n},\qquad
0<\Gb<\Ga,
\end{equation}

\noindent with implicit constants independent of $n$ (so that \eqref{a} holds). Then,

\begin{equation*}
\|u_{\epsilon,z}\|_{\GG} \simeq \epsilon \|u_{\epsilon,z}\| \simeq \epsilon^{\frac{\beta}{\alpha} - 1}
\qquad \text{and} \qquad
u_{\epsilon,z} (z) \simeq \epsilon^{2\left(\frac{\beta}{\alpha} - 1\right)},
\end{equation*}

\noindent with implicit constants independent of $\epsilon$. In particular,
this implies the power law principle (\ref{powerlaw}) with exact exponent:
\begin{equation*}
  M_{\epsilon,z}(z) \simeq \epsilon^{\frac{\beta}{\alpha}}.
\end{equation*}

\end{theorem}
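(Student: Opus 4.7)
The plan is to exploit the explicit series representations already at hand in \eqref{u(z) and norms of u in series}. Each of the three quantities $u_{\epsilon,z}(z)$, $\|u_{\epsilon,z}\|^2$, and $\|u_{\epsilon,z}\|_\GG^2$ is a sum over $n$ in which the summand depends on $\lambda_n$, $|e_n(z)|^2$, and $\epsilon$. Inserting the exponential decay rates from \eqref{lambda_n e_n exp decay} reduces each sum to an essentially geometric one, and the task becomes to identify the dominant term and show it controls the full series from both sides.

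The key is to introduce the crossover index $N=N(\epsilon):=\lceil (2/\alpha)\ln(1/\epsilon)\rceil$, i.e.\ the index where $\lambda_N\simeq\epsilon^2$. For $n\le N$ the denominator $\lambda_n+\epsilon^2$ is comparable to $\lambda_n\simeq e^{-\alpha n}$, while for $n\ge N$ it is comparable to $\epsilon^2$. For the series $u_{\epsilon,z}(z)=\sum_n |e_n(z)|^2/(\lambda_n+\epsilon^2)$, the head $n\le N$ is then geometric-like with common ratio $e^{\alpha-\beta}>1$ (this is where the hypothesis $\beta<\alpha$ is crucial) and is dominated by its last term; the tail $n\ge N$ is geometric-like with ratio $e^{-\beta}<1$ and is dominated by its first term. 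Both dominant terms sit at $n=N$ and equal $\epsilon^{-2}\epsilon^{2\beta/\alpha}=\epsilon^{2\beta/\alpha-2}$, giving $u_{\epsilon,z}(z)\simeq \epsilon^{2(\beta/\alpha-1)}$. The same crossover split applied to the series for $\|u\|^2$ and $\|u\|_\GG^2$ gives, respectively, $\|u\|^2\simeq\epsilon^{2\beta/\alpha-4}$ and $\|u\|_\GG^2\simeq\epsilon^{2\beta/\alpha-2}$, and taking square roots yields $\|u\|_\GG\simeq\epsilon\|u\|\simeq\epsilon^{\beta/\alpha-1}$. As a sanity check these are consistent with the identity \eqref{u(z) and the norms}, which says the two contributions to $u_{\epsilon,z}(z)$ are balanced, as they indeed are in our estimates.

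With these three asymptotics established, the power law for $M_{\epsilon,z}(z)$ follows directly from its definition \eqref{Mepsz}: the two candidates in the minimum, $1/\|u\|$ and $\epsilon/\|u\|_\GG$, are both of order $\epsilon^{2-\beta/\alpha}$, so
\[
M_{\epsilon,z}(z)\simeq u_{\epsilon,z}(z)\cdot \epsilon^{2-\beta/\alpha}\simeq\epsilon^{2(\beta/\alpha-1)+(2-\beta/\alpha)}=\epsilon^{\beta/\alpha},
\]
as claimed.

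The main technical obstacle I anticipate is handling the implicit constants in \eqref{lambda_n e_n exp decay} rigorously, since only $\lambda_n\simeq e^{-\alpha n}$ and $|e_n(z)|^2\simeq e^{-\beta n}$ are assumed, not equalities. Upper and lower bounds must be carried separately: for an upper bound on a sum one uses the two-sided bounds together with the geometric series formula, and for a matching lower bound one retains just a single representative term near $n=N$ (the index $N$ itself, or the nearest integer to it). The condition $\beta<\alpha$ ensures that the ratios of consecutive terms in both the head and the tail are bounded strictly away from $1$ uniformly in $\epsilon$, so the geometric-series constants are independent of $\epsilon$ and can be absorbed into the $\simeq$ relation.
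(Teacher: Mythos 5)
Your proof is correct and is essentially the same as the paper's: the paper packages the crossover-index argument into Appendix Lemma~\ref{LEM sum asymptotics} (with switchover index $J(\eta)$ defined by $a_J\ge\eta>a_{J+1}$, which coincides with your $N(\epsilon)$ up to $O(1)$ when $\eta=\epsilon^2$) and then applies it to the three series in \eqref{u(z) and norms of u in series}. You simply inline that lemma's head/tail split rather than citing it, and you treat $\|u\|_\GG^2$ by the same split directly rather than via the identity \eqref{u(z) and the norms}, which is a sound and in fact slightly more self-contained way to get the matching lower bound there.
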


\vspace{.1in}

\noindent The proof of Theorem~\ref{connections THM} immediately follows from \eqref{u(z) and norms of u in series} and Lemma~\ref{LEM sum asymptotics}.

\subsection{Linear constraints}
\label{sub:lincon}
In one of our examples we encounter a situation where additional linear
constraints are imposed on a previously solved problem. In general all linear
constraints on analytic functions will simply be incorporated into the
definition of the RKHS $\PH$. The question is whether we can use the already
found solution of a problem if additional linear constraints are imposed.  Let
$L \subset \PH$ be a closed, $\mathbb{C}$-linear
subspace. Then $L$ with the inner product from $\PH$ is still a RKHS with the
reproducing kernel $\PP_L p_z$, where $\PP_L$ denotes the orthogonal
projection onto $L$. If we restrict $f$ and $g$ in (\ref{(f,g)=(Kf,g)}) to
elements from $L$, then the operator $\CK$ can be written as $\PP_L \CK
\PP_L$. Then equation (\ref{u RKHS}) can be written (in the language of the
original RKHS $\PH$) as
\begin{equation} \label{u RKHS in L}
\PP_L \CK \PP_L u + \epsilon^2 u = \PP_L p_z,  \qquad u\in \PH,
\end{equation}
whose unique solution $u$ necessarily belongs to $L$.
In general, one's ability to solve the original problem \eqref{u RKHS} would
be of little help for solving \eqref{u RKHS in L}, except in the special case
when $L$ is an invariant subspace of $\CK$. In this case $\PP_L$ commutes with
$\CK$ and if $u$ solves \eqref{u RKHS}, then $\PP_L u$ solves \eqref{u RKHS in
  L}.

The requirement that $L$ be a $\mathbb{C}$-linear subspace is important,
because the linearization argument taking the objective functional $|f(z)|$ in
(\ref{abstract max}) to the one in (\ref{maximization problem})
requires all the constraints to be invariant under
multiplication by a phase factor $\Gl\in\bb{C}$, $|\Gl|=1$. In some
applications, like the analytic continuation of the complex electromagnetic
permittivity the constraints may be just $\RR$-linear, in which case other
techniques have to be applied \cite{grho-CEMP}.
 
\section{Applications}
\setcounter{equation}{0}
\label{sec:applications}
\subsection{The annulus} \label{SECT annulus proof}

Here we prove Theorem~\ref{THM annulus}, so assume the setting of
Section~\ref{SECT annulus}. Note that if we replace $H^2$-norm in
Theorem~\ref{THM annulus} by another equivalent norm, this will only change
the constant $C$ in the inequality \eqref{main bound annulus}. In order to
apply our theory we need a norm, induced by an inner product, with respect to
which the reproducing kernel of the space $H^2$ is as simple as possible. To
define such an inner product we use the Laurent expansion 
\begin{equation} \label{f+- def}
f(\zeta) = \sum_{n \geq 0} f_n \zeta^n + \sum_{n < 0} f_n \zeta^n =: f_+(\zeta) + f_-(\zeta),
\end{equation}

\noindent then $f \in H^2(A_\rho)$ \IFF $f_+ \in H^2(\{|\zeta|<1\})$ and $f_- \in H^2(\{|\zeta| > \rho\})$ (cf. \cite{sara65}). So we define

\begin{equation} \label{inner product H^2}
(f,g) = \tfrac{1}{2\pi} (f_+, g_+)_{L^2(\Gamma_1)} + \tfrac{1}{2\pi \rho} (f_-, g_-)_{L^2(\Gamma_\rho)},
\end{equation}

\noindent The norm in $H^2(A_\rho)$ induced by \eqref{inner product
  H^2} is equivalent to the norm \eqref{H^2 annulus def} (e.g. \cite{sara65,koos98}). Now the functions $\{\zeta^n\}_{n\in\ZZ}$ form a basis in $H^2(A_\rho)$, let us normalize them:

\begin{equation} \label{e_n annulus}
e_n(\zeta)=
\begin{cases}
\zeta^n, &\qquad n \geq 0
\\
(\zeta / \rho)^n, &\qquad n<0,
\end{cases}
\end{equation}

\noindent then $\{e_n\}_{n \in \ZZ}$ is orthonormal basis of $H^2(A_\rho)$. Definition of the reproducing kernel implies that $p(\zeta, \tau) = \sum_n \overline{e_n(\tau)} e_n(\zeta)$. Computing this sum, or by adding kernels of the spaces $H^2(\{|\zeta|<1\})$ and $H^2(\{|\zeta|>\rho\})$, we find the reproducing kernel of $H^2(A_\rho)$:

\begin{equation} \label{p annulus}
p(\zeta, \tau) = \frac{1}{1-\zeta \overline{\tau}} + \frac{\rho^2}{\zeta \overline{\tau} - \rho^2}.
\end{equation}

\noindent Note that $p_z \notin \CK(\PW)$. Indeed, the function $p_{z}$ has
simple poles at $\overline{z}^{-1}, \rho^2 \overline{z}^{-1}$. At the same time, for
any $f \in \PW \subset L^2(\Gamma)$ the function $\CK f$ may have singularities only in
the set $S=\cup_{\tau \in \Gamma} \{\overline{\tau}^{-1}, \rho^2
\overline{\tau}^{-1}\}$. If $\overline{z}^{-1}\in S$, then
$z\in\GG\cup\rho^{-2}\GG$. If $\rho^2 \overline{z}^{-1}\in S$, then
$z\in\GG\cup\rho^{2}\GG$. 
But since $z \notin \Gamma$ and curves $\rho^{\pm2}\GG$ are outside of the
annulus $A_{\rho}$, the equation $\CK f (\zeta) = p(\zeta,z)$ for $\zeta \in
A_\rho$ cannot have any solutions in $\PW$.  

We observe that for any orthonormal basis $\{e_{n}:n\in\bb{Z}\}$ of $\PH$ we have, using \eqref{(f,g)=(Kf,g)},
\begin{equation} \label{K annulus}
\CK f = \sum_{n \in \ZZ}(\CK f,e_{n})e_{n}=\sum_{n \in \ZZ} (f,e_n)_{L^2(\Gamma)} \ e_n.
\end{equation}
It is easy to verify that when $\Gamma$ is a circle centered at the origin,
the functions $\{e_n\}$, given by (\ref{e_n annulus}) are also orthogonal
in $L^2(\Gamma)$ and hence, taking $f=e_{m}$ in (\ref{K annulus}) we
conclude that $\CK e_m = \|e_m\|_{L^2(\Gamma)}^2 e_m$. So we have proved
\begin{lemma}
Let $\{e_n\}_{n \in \ZZ}$ be given by \eqref{e_n annulus} and $\CK$ given by \eqref{K annulus}, then

\begin{equation*}
\CK e_n = \lambda_n e_n, \qquad \qquad n \in \ZZ,
\end{equation*}

\noindent where

\begin{equation} \label{lambda_n annulus}
\lambda_n = 2\pi r
\begin{cases}
r^{2n}, &\qquad n \geq 0
\\
(r / \rho)^{2n}, &\qquad n<0
\end{cases}
\end{equation}
 
\end{lemma}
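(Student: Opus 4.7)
The plan is to verify what the preceding discussion has already outlined: by the expansion \eqref{K annulus}, if $\{e_n\}_{n\in\ZZ}$ is an orthogonal family in $L^2(\GG_r)$, then taking $f = e_m$ in that formula forces $\CK e_m = \|e_m\|_{L^2(\GG_r)}^2 e_m$. Thus the whole lemma reduces to a single Fourier-orthogonality computation on the circle $\GG_r$, together with identifying the diagonal values $\|e_n\|_{L^2(\GG_r)}^2$.

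Concretely, I would parameterize $\tau = r e^{i\theta}$, $\theta \in [0, 2\pi)$, with arc length measure $|\D\tau| = r\,\D\theta$, and observe that each $e_n$ can be written uniformly as $e_n(\zeta) = c_n \zeta^n$ with $c_n = 1$ for $n \geq 0$ and $c_n = \rho^{-n}$ for $n < 0$. Then
\[
(e_n, e_m)_{L^2(\GG_r)} = c_n \overline{c_m}\, r^{n+m+1} \int_0^{2\pi} e^{i(n-m)\theta}\, \D\theta = 2\pi r\, |c_n|^2\, r^{2n}\, \delta_{nm},
\]
by the elementary identity $\int_0^{2\pi} e^{ik\theta}\,\D\theta = 2\pi \delta_{k0}$. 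This simultaneously establishes orthogonality of $\{e_n\}$ in $L^2(\GG_r)$ and gives $\|e_n\|_{L^2(\GG_r)}^2 = 2\pi r\, |c_n|^2 r^{2n}$. Substituting $|c_n|^2 = 1$ for $n \geq 0$ and $|c_n|^2 = \rho^{-2n}$ for $n < 0$ yields the two cases of \eqref{lambda_n annulus}. Invoking \eqref{K annulus} with $f = e_m$ then gives $\CK e_n = \Gl_n e_n$.

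There is no genuine obstacle here, as the argument is pure Fourier analysis on a circle; the only point requiring attention is the asymmetric normalization in \eqref{e_n annulus}, where the factor $\rho^{-n}$ for negative $n$ is precisely what converts $r^{2n}$ into $(r/\rho)^{2n}$ in the formula for $\Gl_n$. It is worth remarking that this diagonalization works only because $\GG$ is a circle \emph{centered at the origin}, so that the same basis of monomials diagonalizes both the $\PH$-inner product \eqref{inner product H^2} and the $L^2(\GG)$-inner product; for a general curve $\GG \Subset A_\rho$ one would have to solve a genuine integral equation rather than read off the spectrum directly.
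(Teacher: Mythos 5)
Your proof is correct and follows exactly the route the paper takes: the paper's own justification (in the sentences immediately preceding the lemma) is precisely to observe orthogonality of the $e_n$ on the centered circle and then read off $\CK e_m = \|e_m\|_{L^2(\Gamma)}^2 e_m$ from \eqref{K annulus}. You have simply written out the elementary Fourier computation that the paper leaves as ``easy to verify,'' and your bookkeeping of the normalization $c_n = \rho^{-n}$ for $n<0$ correctly accounts for the $(r/\rho)^{2n}$ in \eqref{lambda_n annulus}.
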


We see that $\lambda_n$ and $|e_n(z)|$ approach to zero along two different sequences and have two different asymptotic behaviors, which are distinguished by the location of $z$ relative to $\Gamma$. Therefore, to apply Theorem~\ref{connections THM} we need to consider two cases. Assume that $z$ lies outside of $\Gamma$, i.e. $|z| \in (r,1)$. The function $u$ from \eqref{u RKHS} is given by

\begin{equation} \label{u annulus}
u(\zeta) = \sum_{n \in \ZZ} \frac{\overline{e_n(z)} e_n(\zeta)}{\lambda_n + \epsilon^2}.
\end{equation}

\noindent Note that, for any $n \in \ZZ$

$$\frac{|e_n(z)|^2}{\lambda_n} = \frac{1}{2\pi r} \left( \frac{|z|}{r} \right)^{2n}.$$

\noindent By assumption the above quantity is summable over $n<0$, this
implies that in analyzing $u(z)$ the sum over negative indices is $O(1)$, as
$\epsilon\to 0$, and hence can be ignored. The dominant part is the sum over $n \geq 0$. Analogously, in quantities $\|u\|_{H^2(A_\rho)}, \|u\|_{L^2(\Gamma)}$ as well, the sum can be restricted to $n \geq 0$. This determines the behaviors $\lambda_n \simeq r^{2n}$ and $|e_n(z)| \simeq |z|^n$, therefore Theorem~\ref{connections THM} implies that the exponent is $\gamma(z) = \frac{\ln |z|}{\ln r}$. The case $|z| \in (\rho, r)$ is done analogously and \eqref{gamma annulus} now follows.

Next, we can rewrite \eqref{u annulus} as 

\begin{equation}
u(\zeta) = \sum_{n \geq 0} \frac{\overline{z}^n \zeta^n}{2\pi r r^{2n} + \epsilon^2} + \sum_{n <0} \frac{\overline{z}^n \zeta^n}{2\pi r r^{2n} + \epsilon^2 \rho^{2n}}.
\end{equation}

\noindent Let us consider the function

\begin{equation}
\tilde{u}(\zeta) = \sum_{n \in \ZZ} \frac{\overline{z}^n \zeta^n}{r^{2n} + \epsilon^2(1+\rho^{2n})},
\end{equation}

\noindent clearly for negative indices $\rho^{2n} \ll 1$ and hence can be ignored, and for positive indices $1$ can be ignored from the denominator in the definition of $\tilde{u}$. Therefore, values of $\tilde{u}, u$ at $z$ and their $H^2$ and $L^2$-norms have the same behavior in $\epsilon$. Thus, we may consider $\tilde{u}$ instead, which then gives rise to the maximizer function $M$ in \eqref{M annulus}. Finally, the fact that $\|M\|_{H^\infty (\overline{A_\rho})}$ is bounded uniformly in $\epsilon$ follows from the application of Lemma~\ref{LEM sum asymptotics}.

\subsection{The upper half-plane}
\setcounter{equation}{0} 
\label{sec:uhp}

\noindent \textbf{Notation:} Let $D(c,r)$ and $C(c,r)$ denote respectively the closed disk and the circle centered at $c$ and of radius $r$ in the complex plane.

\vspace{.1in} In this section we prove Theorem~\ref{THM H+}. The Hardy space
$H^2(\HH_+)$ of functions analytic in the complex upper half-plane $\HH_+$ is
a RKHS with the inner product $(f,g) = (f,g)_{L^2(\RR)}$. By Cauchy's integral formula
\begin{equation*}
f(z) = \frac{1}{2\pi i} \int_{\RR} \frac{f(x) \D x}{x-z}.
\end{equation*}

\noindent Therefore, the reproducing kernel $p$ of $H^2(\HH_+)$ is
\begin{equation*}
p_{\tau}(\Gz)=p(\zeta, \tau) = \frac{i}{2\pi (\zeta - \overline{\tau})},\qquad \{\Gz,\tau\}\subset\HH_+.
\end{equation*}

\noindent In Theorem~\ref{THM H+} the data is measured on $\Gamma = C(i,r)$ with $r \in (0,1)$. Using the definition of $\CK$ \eqref{K RKHS} we have

\begin{equation*}
\CK u (\zeta) = \frac{1}{2\pi} \int_{\Gamma} \frac{i u(\tau) |\D \tau|}{\zeta - \overline{\tau}}.
\end{equation*}

\noindent Note that $p_z \notin \CK(\PW)$. Indeed, the function $p_{z}$ is
analytic everywhere in $\bb{C}$, except at $\bra{z}$, where it has a
pole. At the same time for any $f \in \PW \subset L^2(\Gamma)$ the function $\CK f$
is analytic everywhere in $\bb{C}$ outside of $\overline{\Gamma}$. But
$\overline{z}\notin\overline{\Gamma}$, since $z$ lies outside of
$\Gamma$. Therefore, the equation $\CK f = p_{z}$ has no solutions in $\PW$.

\begin{lemma}
\label{lem:eigenstuff}
Let $r \in (0,1)$ and $\Gamma = C(i,r)$. Let $\{e_n\}_{n=1}^\infty$ be an
orthonormal eigenbasis of $\CK$ in $H^2(\HH_+)$, with eigenvalues $\{\lambda_n\}_{n=1}^\infty$. Then
\begin{equation} \label{lambda_n and e_n of UHP}
\lambda_n = \frac{r \rho^{2n}}{1+\sqrt{1-r^2}}, \qquad \qquad
e_n(\zeta) = \frac{\sqrt[4]{1-r^2}}{\sqrt{\pi}} \frac{m(\zeta)^n}{\zeta + z_0},
\end{equation}

\noindent where $\rho, z_0, m(\zeta)$ are as in Theorem~\ref{THM H+}.
\end{lemma}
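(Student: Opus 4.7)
The plan is to reduce the problem to the well-understood eigenvalue calculation on the unit disk by pulling back along the M\"obius map $w=m(\zeta)$. Since $m$ sends $z_0\in\HH_+$ to the origin and M\"obius maps send generalized circles to generalized circles, $m(\Gamma)$ is a circle centered at $0$; a direct calculation of $|m(\tau)|^2$ for $\tau=i+re^{i\phi}\in\Gamma$, using the identities $1-\sqrt{1-r^2}=r\rho$ and $1+\sqrt{1-r^2}=r/\rho$ that follow from the definition of $\rho$, shows this image is exactly $C(0,\rho)$.

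Next, I would introduce the induced unitary operator $U:H^2(\HH_+)\to H^2(D)$ defined by
\[
(Uf)(w)=\frac{c}{1-w}\,f(\zeta(w)),\qquad \zeta(w)=\frac{z_0(1+w)}{1-w},
\]
with $|c|=2\sqrt{\pi}(1-r^2)^{1/4}$. The relation $dx/d\theta=2|z_0|/|1-e^{i\theta}|^2$ along $\RR\leftrightarrow\partial D$ makes $U$ an isometry from $(L^2(\RR),\int\cdot\,dx)$ onto $(L^2(\partial D),\tfrac{1}{2\pi}\int\cdot\,d\theta)$. The same M\"obius Jacobian applied on $\Gamma\leftrightarrow C(0,\rho)$, namely $|d\tau|=2\sqrt{1-r^2}|1-w|^{-2}|dw|$, cancels the $|1-w|^2$ factor from $U$ and transforms the Hermitian form $(f,g)_{L^2(\Gamma)}$ into $\tfrac{1}{2\pi}\int_{C(0,\rho)} Uf\,\overline{Ug}\,|dw|$. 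Hence $U\CK U^{-1}$ coincides with the operator $\tilde\CK$ on $H^2(D)$ associated to this form.

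On $H^2(D)$ the monomials $\{w^n\}_{n\ge 0}$ are orthonormal, and a one-line orthogonality calculation on $|dw|=\rho\,d\theta$ gives $\tilde\CK w^n=\rho^{2n+1}w^n$. Pulling back via $U^{-1}$,
\[
e_n(\zeta)=\frac{1-m(\zeta)}{c}\,m(\zeta)^n=\frac{2z_0}{c}\cdot\frac{m(\zeta)^n}{\zeta+z_0},
\]
and since $|2z_0/c|=(1-r^2)^{1/4}/\sqrt{\pi}$, this matches (\ref{lambda_n and e_n of UHP}) up to a unimodular phase; the eigenvalue $\rho^{2n+1}$ equals $r\rho^{2n}/(1+\sqrt{1-r^2})$ by the definition of $\rho$. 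Orthonormality and completeness of $\{e_n\}$ in $H^2(\HH_+)$ follow at once from the corresponding properties of $\{w^n\}$ in $H^2(D)$.

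The main obstacle is the careful bookkeeping of multiplicative constants through the two different $H^2$ conventions (plain Lebesgue measure on $\RR$ versus normalized Lebesgue on $\partial D$) and the requirement that the domain Jacobian of $m$ cancel the Jacobian on $\Gamma$ to give a clean intertwining. A direct alternative, avoiding the unitary identification altogether, is to substitute the boundary identity $|d\tau|=-ir(\tau-i)^{-1}d\tau$ (valid on $\Gamma$) to turn $\CK e_n(\zeta)$ into a contour integral; its only interior pole sits at $\tau_1=i+r^2/(\zeta+i)$, and the remarkable identity $m(\tau_1)=\rho^2 m(\zeta)$ makes the residue collapse to $\rho^{2n+1}e_n(\zeta)$.
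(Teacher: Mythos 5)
Your main argument is correct, and it takes a genuinely different route from the paper. The paper proves the lemma from scratch: it computes the action of $\CK$ via a residue, obtaining $\CK w(\zeta)=\frac{ir}{\zeta+i}w(\sigma(\zeta))$ for the M\"obius map $\sigma(\zeta)=i+r^2/(\zeta+i)$, then bootstraps analyticity of any eigenfunction across the shrinking disks $\sigma(D_k)$, locates the only possible singularity at the fixed point $\zeta_0=-z_0$ of $\sigma$, changes variables to make the eigenfunction entire, and solves the resulting difference-type functional equation, \emph{discovering} the form $m(\zeta)^n/(\zeta+z_0)$ in the process. You instead start from the known map $m$ and show that the weighted composition operator $U$ it induces is a unitary from $H^2(\HH_+)$ onto $H^2(D)$ that intertwines $\CK$ with the analogous restriction operator for the concentric circle $C(0,\rho)$ — where the eigenbasis is obviously the monomial one — so the eigenfunctions and eigenvalues are just pushed back through $U^{-1}$. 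What this buys you is conceptual economy: it makes explicit that the upper half-plane problem is conformally equivalent to the limiting disk/annulus problem of Section 4.1 rather than an independent fact, and the $\rho^{2n+1}$ eigenvalues drop out of a one-line orthogonality computation. What it costs is that you need to already know the right M\"obius map and must carefully track the two Jacobians (on $\RR\leftrightarrow\partial D$ and on $\Gamma\leftrightarrow C(0,\rho)$) so that the $|1-w|^{-2}$ factors cancel; you do carry this out, and all the constants you quote ($|c|=2\sqrt\pi(1-r^2)^{1/4}$, $|2z_0/c|=(1-r^2)^{1/4}/\sqrt\pi$, and $\rho^{2n+1}=r\rho^{2n}/(1+\sqrt{1-r^2})$) check out. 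Your second, residue-based sketch at the end — reducing $\CK e_n$ to the residue at the unique interior pole $\tau_1=i+r^2/(\zeta+i)$ and using $m(\tau_1)=\rho^2 m(\zeta)$ — is essentially the paper's first step, but used to verify the ansatz rather than to discover it; that identity is also correct.
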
 

\vspace{.1in}

Before proving this lemma, let us see that it concludes the proof of Theorem~\ref{THM H+} upon the application of Theorems~\ref{THM main RKHS} and \ref{connections THM}. Indeed, $\lambda_n \simeq \rho^{2n}$ and $|e_n(z)| \simeq |m(z)|^n$, then the formula \eqref{gammaUHP} for the exponent $\gamma(z)$ follows. The function $u$ from \eqref{u RKHS} is given by

\begin{equation*}
u(\zeta) = \frac{\pi^{-1} \sqrt{1-r^2}}{(\overline{z} + \overline{z}_0)(\zeta + z_0)} \sum_{n=1}^\infty \frac{\overline{m(z)}^n m(\zeta)^n}{\frac{r}{1+\sqrt{1-r^2}} \rho^{2n} +\epsilon^2 }.
\end{equation*}

\noindent As in the case of annulus, ignoring the constants that don't affect
the asymptotics of the function as $\epsilon\to 0$ we obtain the maximizer \eqref{M
  upper halfplane}. 

\begin{proof}[Proof of Lemma~\ref{lem:eigenstuff}]
Let $\CK w(\zeta) = \lambda w(\zeta)$, then $w$ must be analytic in the
extended complex plane with the closed disk $D(-i,r)$ removed. In particular,
it is analytic in $D(i,r)$. Thus, we can evaluate the
operator $\CK$ explicitly in terms of values of $w$.
\[
\CK w(\zeta) = \frac{1}{2\pi} \int_{0}^{2\pi}\frac{irw(i+re^{it})dt}{\zeta+i-re^{-it}} = \frac{1}{2\pi}
\int_{C(0,r)}\frac{r w(i+\tau) d\tau}{(\zeta+i)\tau-r^{2}}.
\]
We note that $r^{2}/|\zeta+i| <r$ precisely when $\zeta$ is outside of the closed disk
$D(-i,r)$. In addition $w(i+\tau)$ is analytic in $D(0,r)$, hence
\[
\CK w(\zeta) = \frac{i r}{\zeta+i} w \left( i + \frac{r^{2}}{\zeta+i} \right).
\]
Next we note that the M\"obius transformation 
\[
\sigma(\zeta)=i+\frac{r^{2}}{\zeta+i}
\]
maps $D(-i,r)$ onto the exterior of $D(i,r)$. In particular there is a disk
$D_{1}\subset D(-i,r)$ such that $\sigma(D_{1})=D(-i,r)$. Then $\CK w$ is
analytic in the exterior of $D_{1}$, since $w$ is analytic outside of
$D(-i,r)$. But $w$ is an eigenfunction of $\CK$, hence it must also be
analytic outside of $D_{1}$. Repeating the argument using the fact that $w$ is
analytic in the larger domain $\bb{C}\setminus D_{1}$ we conclude that it must
also be analytic outside of $D_{2}\subset D_{1}$, such that
$\sigma(D_{2})=D_{1}$. We can continue like this indefinitely, showing that
the only possible singularity of $w$ must be at the fixed point $\zeta_{0}\in
D(-i,r)$ of $\sigma(\zeta)$. We find
\[
\zeta_{0}=-i\sqrt{1-r^{2}}.
\]
Since $w$ is analytic at infinity the transformation $\eta=1/(\zeta-\zeta_{0})$ will map the extended complex plane with $\zeta_{0}$ removed to the entire complex plane (without the infinity). The eigenfunction $w$ will then be an
entire function in the $\eta$-plane.
Let $v(\eta)=w(\eta^{-1}+\zeta_{0})$. Then
\[
w(\zeta) = v \left(\nth{\zeta-\zeta_{0}}\right).
\]
The relation $\CK w = \lambda w$ now reads
\[
\Gl v(\eta) = \frac{i r \eta}{\eta (\zeta_{0}+i)+1} v \left(\frac{\eta (\zeta_{0}+i)+1}{i-\zeta_{0}}\right).
\]
One corollary of this equation is that $v(0)=0$. Hence,
$\phi(\eta)=\eta^{-1}v(\eta)$ is also an entire function, satisfying
\[
\Gl\phi(\eta)=\frac{ir}{i-\zeta_{0}}\phi\left(\frac{\eta(\zeta_{0}+i)+1}{i-\zeta_{0}}\right).
\]
We see that $\phi(\eta)$ is an entire function with the property that $\phi(a
\eta + b)$ is a constant multiple of $\phi(\eta)$, with $b =
\frac{1}{i-\zeta_0}$ and $a=\rho^2$, where $\rho$ is given by
\eqref{gammaUHP}. It remains to observe that such a property holds for functions
$\phi_{n}(\eta)=(\eta-\eta_{0})^{n}$, provided
\[
\frac{\eta_{0}-b}{a} = \eta_{0} \quad \eqv \quad \eta_{0}=\frac{b}{1-a}.
\]
Indeed,
\[
(a \eta + b - \eta_{0})^{n} = a^{n} \left( \eta - \frac{\eta_{0}-b}{a}\right)^{n}=a^{n}(\eta-\eta_{0})^{n}.
\]
In our case we get $\eta_{0}=-\frac{1}{2\zeta_{0}}$ and conclude that $\phi_{n}(\eta)=\left(\eta + \frac{1}{2 \zeta_{0}}\right)^{n}$ and $\lambda_n$ is given by \eqref{lambda_n and e_n of UHP}. Converting the formula back to $w_{n}(\zeta)$ we obtain (up to a constant multiple)
\[
w_{n}(\zeta)=\frac{1}{\zeta - \zeta_{0}}\left(\frac{\zeta + \zeta_{0}}{\zeta - \zeta_{0}}\right)^{n} = \frac{m(\zeta)^n}{\zeta - \zeta_{0}}.
\]

It remains to normalize the eigenfunctions $w_n$. For that we compute
\[
\|w_{n}\|_{H^2(\HH_+)}^{2} = \int_{\RR} |w_n|^2 \D x = \int_{\RR} \frac{\D x}{|x-\zeta_0|^2} = \frac{\pi}{ \sqrt{1-r^2}}.
\]
\end{proof}

\subsection{The Bernstein ellipse} \label{SECT ellipse}

\subsubsection{From the ellipse to the annulus}
\label{sss:el2an}
The ellipse $E_{R}$ is conformally equivalent to a disk or the upper
half-plane. The conformal mapping effecting the equivalence can be written
explicitly in terms of the Weierstrass $\Gz$-function, but the image of the
interval $[-1,1]$ will then be a curve that would not permit any kind of
explicit solution of the resulting integral equation. Instead we use a much
simpler Joukowski function $J(\omega) = \frac{\omega + \omega^{-1}}{2}$ that
will convert the problem in the ellipse to the problem in an annulus with
$\GG$ being a concentric circle inside the annulus. We observe that $J(\Go)$
maps the annulus $\{R^{-1} < |\omega| < R\}$ onto the Bernstein ellipse $E_R$
in 2-1 fashion, meaning that each point in $E_R$ has exactly two (if we count
the multiplicity) preimages in the annulus (note that $J(\omega) =
J(\omega^{-1})$). Moreover, the unit circle gets mapped onto $[-1,1] \subset
E_R$ under $J$. So given a function $F\in H^{\infty}(E_R)$, the function
$f(\zeta):= F(J(R\zeta))$ is analytic in $A_{\rho}$ defined in \eqref{A_rho
  and gamma}, with $\rho = R^{-2}$, has the same $H^{\infty}$ norm, and
satisfies the symmetry property
\begin{equation}
  \label{symmetry}
  f(\overline{\zeta}) = f(\zeta) \qquad \forall |\zeta| = r=\nth{R}.
\end{equation}
Conversely, any function $f\in H^{\infty}(A_{\rho})$, satisfying
(\ref{symmetry}) defines an analytic function in a Bernstein ellipse (with the
same $H^{\infty}$ norm). This is so because (\ref{symmetry}) can also be
written as 
\begin{equation}
  \label{Rsymmetry}
f\left(\nth{R^{2}\Gz}\right)=f(\Gz) \qquad \forall |\zeta| =r.
\end{equation}
The Schwarz reflection principle then guarantees that (\ref{Rsymmetry}) holds
for all $\Gz\in A_{\rho}$. This implies that
$F(u)=f(R^{-1}J^{-1}(u))$ gives the same value for each of the two branches of
$J^{-1}$ and hence defines an analytic function in $E_{R}$. 
Thus, the analytic continuation problem in ellipse reduces to the one in
the annulus, but with an additional symmetry constraint (\ref{symmetry}). 

\subsubsection{The annulus with symmetry}
Let us now define
\begin{equation}
  \label{Hsym}
  \PH=\{f\in H^2(A_\rho):f(\overline{\zeta}) = f(\zeta) \quad \forall |\zeta| = \sqrt{\rho}\}.
\end{equation}
The curve $\GG$ will be a circle $\GG_{r}$ centered at the origin of radius
$r=\sqrt{\rho}$. 

\begin{lemma}[Annulus with symmetry] \label{LEM annulus symm}
Let $0<\rho<1$ and let $z\in\bb{C}$ be such that
$r<|z|<1$. Then there exists $C>0$, such that for every $\Ge>0$ and every 
$f \in \PH$ with $\|f\|_{H^2} \leq 1$ and
$\|f\|_{L^2(\Gamma_r)} \leq \epsilon$ we have the bound
\begin{equation} \label{main bound annulus sym}
|f(z)| \leq C \epsilon^{\gamma(z)},
\end{equation}
where the exponent $\gamma(z)$ is the same as in Theorem~\ref{THM annulus},
i.e. 
\begin{equation}
  \label{gmz}
  \gamma(z) = \frac{\ln|z|}{\ln r}. 
\end{equation}
Moreover, \eqref{main bound annulus sym} is asymptotically optimal as
$\epsilon\to 0$ and the function attaining the bound is  
\begin{equation} \label{M annulus sym}
M(\zeta) = \epsilon^{2 - \gamma(z)} \sum_{n =1}^\infty \frac{\overline{z}^n + (\rho / \overline{z})^{n}}{\rho^{n} + \epsilon^2} [\zeta^n + (\rho/\zeta)^n], \qquad \Gz \in A_\rho.
\end{equation}
\end{lemma}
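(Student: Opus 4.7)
The plan is to realize $\PH$ as a closed $\mathbb{C}$-linear subspace of $H^{2}(A_{\rho})$ and then invoke the linear-constraints framework of Section~\ref{sub:lincon} on top of the already-computed spectral data for the annulus. First, I would rewrite the pointwise symmetry $f(\overline{\zeta})=f(\zeta)$ on $|\zeta|=r=\sqrt{\rho}$ in terms of the Laurent coefficients $f(\zeta)=\sum_{n\in\ZZ}f_{n}\zeta^{n}$. Parametrizing $\zeta=re^{i\theta}$, the identity of the two Fourier series forces $f_{n}r^{n}=f_{-n}r^{-n}$, i.e.\ $f_{-n}=\rho^{n}f_{n}$. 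In terms of the orthonormal basis $\{e_{n}\}_{n\in\ZZ}$ from \eqref{e_n annulus}, this gives an explicit orthonormal basis of $\PH$ consisting of $\tilde{e}_{0}=1$ and $\tilde{e}_{n}=\tfrac{1}{\sqrt{2}}(e_{n}+e_{-n})=\tfrac{1}{\sqrt{2}}(\zeta^{n}+\rho^{n}\zeta^{-n})$ for $n\ge 1$. In particular $\PH$ is closed and $\mathbb{C}$-linear.

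The key observation, and the main obstacle, is to verify that $\PH$ is invariant under the integral operator $\CK$ from Section~\ref{SECT annulus proof}. This is where the specific choice $r=\sqrt{\rho}$ enters decisively: the eigenvalues in \eqref{lambda_n annulus} satisfy the degeneracy $\lambda_{n}=\lambda_{-n}=2\pi r\rho^{|n|}$ (geometrically, this reflects that the inversion $\zeta\mapsto\rho/\zeta$ is an isometry of $A_{\rho}$ preserving $\Gamma_{r}$ setwise). Hence each 2D eigenspace $\Span\{e_{n},e_{-n}\}$ of $\CK$ intersects $\PH$ in exactly the line $\mathbb{C}\tilde{e}_{n}$, and $\CK\tilde{e}_{n}=\lambda_{n}\tilde{e}_{n}$. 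Thus $\{\tilde{e}_{n}\}_{n\ge 0}$ is an orthonormal eigenbasis of $\CK|_{\PH}$, and $\PP_{\PH}$ commutes with $\CK$. By Section~\ref{sub:lincon}, Theorem~\ref{THM main RKHS} applies to $\PH$ with reproducing kernel $\PP_{\PH}p_{z}$ and operator $\CK|_{\PH}$, and the unique solution of the restricted integral equation is $u_{\epsilon,z}(\zeta)=\sum_{n\ge 0}\overline{\tilde{e}_{n}(z)}\tilde{e}_{n}(\zeta)/(\lambda_{n}+\epsilon^{2})$. The nontriviality $\PP_{\PH}p_{z}\notin\CK(\PW\cap\PH)$ follows from the same pole argument as in Section~\ref{SECT annulus proof}, since $z\notin\Gamma_{r}\cup\rho^{\pm 2}\Gamma_{r}$.

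Next, I would apply Theorem~\ref{connections THM}. For $r<|z|<1$ the term $z^{n}$ dominates $\rho^{n}z^{-n}$ in $\tilde{e}_{n}(z)$, since $|z|^{2}>\rho$ gives $(|z|^{2}/\rho)^{n}\to\infty$. Hence $\lambda_{n}\simeq\rho^{n}=e^{-\alpha n}$ and $|\tilde{e}_{n}(z)|^{2}\simeq|z|^{2n}=e^{-\beta n}$ with $\alpha=-\ln\rho$ and $\beta=-2\ln|z|$, and the condition $0<\beta<\alpha$ is exactly the hypothesis $r<|z|<1$. Theorem~\ref{connections THM} then yields the exponent $\gamma(z)=\beta/\alpha=\ln|z|/\ln r$, which is \eqref{gmz}. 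Finally, to extract \eqref{M annulus sym}, I would substitute $\overline{\tilde{e}_{n}(z)}\tilde{e}_{n}(\zeta)=\tfrac{1}{2}[\bar{z}^{n}+(\rho/\bar{z})^{n}][\zeta^{n}+(\rho/\zeta)^{n}]$ into the series for $u_{\epsilon,z}$, drop the bounded $n=0$ contribution and the $\epsilon$-independent prefactors (which do not affect the asymptotic order in $\epsilon$), and normalize by the factor $\epsilon^{2-\gamma(z)}$ determined by Theorem~\ref{THM main RKHS} and Theorem~\ref{connections THM}, exactly as was done at the end of Section~\ref{SECT annulus proof}. The uniform $\epsilon$-bounds on $\|M\|_{H^{2}}$ and $\|M\|_{L^{2}(\Gamma_{r})}$ then follow from the same application of Lemma~\ref{LEM sum asymptotics} used in the annulus case.
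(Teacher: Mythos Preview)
Your proposal is correct and follows essentially the same route as the paper: realize the symmetric functions as a closed $\mathbb{C}$-linear subspace, show this subspace is $\CK$-invariant, and then apply the linear-constraints framework of Section~\ref{sub:lincon} together with Theorem~\ref{connections THM}. The only stylistic difference is that you verify the invariance spectrally via the degeneracy $\lambda_{n}=\lambda_{-n}$ at $r=\sqrt{\rho}$, whereas the paper verifies the equivalent commutation $\PP_{L}\CK=\CK\PP_{L}$ by checking the kernel identity $p(\zeta,\rho/\tau)=p(\rho/\zeta,\tau)$ directly; these are two sides of the same coin.
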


\begin{proof}
We note that the maximization problem in Lemma~\ref{LEM annulus symm} differs
from the one in Theorem~\ref{THM annulus} by the requirement of symmetry
(\ref{symmetry}). Hence, following the theory in Section~\ref{sub:lincon} we define
the subspace
\begin{equation*}
L = \{f \in H^2(A_\rho): f(\zeta) = f (\overline{\zeta}) \qquad \forall \zeta
\in \Gamma_r\},\quad r=\sqrt{\rho}.
\end{equation*}
Then, the orthogonal projection onto $L$ will be given by
\begin{equation} \label{P_L in annulus}
\PP_L f(\zeta) = \frac{f(\zeta) + f (\rho / \zeta)}{2}.
\end{equation}
\begin{lemma}
  \label{lem:Linv}
  The integral operator $\CK$ with kernel (\ref{p annulus}) and $\GG=\GG_{r}$
  commutes with $\PP_{L}$.
\end{lemma}
\begin{proof}
  \noindent The commutation $\PP_L \CK = \CK \PP_L$ is then equivalent to

\begin{equation*}
\int_{\Gamma_r} p(\zeta, \tau) u(r^2 / \tau) |\D \tau| = \int_{\Gamma_r} p(r^2 / \zeta, \tau) u(\tau) |\D \tau|
\end{equation*}

\noindent which, after change of variables on the left-hand side reduces to

$$p(\zeta, \rho/\tau) = p(\rho / \zeta, \tau) \qquad \qquad \forall \zeta \in A_\rho, \ \forall \tau \in \Gamma_r.$$ 
Substituting the definition of $p$ from \eqref{p annulus} into this formula we
easily verify it.
\end{proof}
According to the theory in Section~\ref{sub:lincon} the solution of (\ref{u
  RKHS in L}) is $u_{L}=\PP_L u$, where $u$ is given by \eqref{u annulus}.
We observe that in the case $r^2 = \rho$ we have $\lambda_n = \lambda_{-n}$ and $e_n(\rho/\zeta) = e_{-n}(\zeta)$, so that
\begin{equation}
u_{L}=\PP_L u(\zeta) = \frac{1}{1+\epsilon^2} + \frac{1}{2} \sum_{n=1}^\infty \frac{\overline{e_n(z)} + \overline{e_{-n}(z)}}{\lambda_n + \epsilon^2} [e_n(\zeta)+e_{-n}(\zeta)]
\end{equation}
Substituting the expressions for $\lambda_n, e_n$ from (\ref{lambda_n
  annulus}), (\ref{e_n annulus}), respectively, and ignoring the first $O(1)$ term and some constants, which affect the asymptotics of $u_L$ by constant factors, we arrive at the function
\begin{equation*}
u_L(\zeta) = \sum_{n=1}^\infty \frac{\overline{z}^n + (\rho / \overline{z})^n}{\rho^n + \epsilon^2} [\zeta^n + (\rho / \zeta)^n].
\end{equation*}
We note that
\[
e_{n}^{L}=\frac{1}{2}\left(\zeta^n + (\rho / \zeta)^n\right),\quad n\ge 0,
\]
is the orthonormal eigenbasis of $L$ with respect to $\PP_{L}\CK\PP_{L}$. The
corresponding eigenvalues are $\Gl_{n}=2\pi\sqrt{\rho} \rho^{n}$, and for $|z|
\in (r,1)$ we have $|e_{n}^{L}(z)|\simeq |\overline{z}^n + (\rho / \overline{z})^n| \simeq |z|^n$. 
Then, Theorem~\ref{connections THM} gives formula (\ref{gmz}) as well as the maximizer function
(\ref{M annulus sym}).
\end{proof}

\subsubsection{From the annulus to the ellipse}
In this section we will show that Theorem~\ref{THM ellipse} follows from
Lemma~\ref{LEM annulus symm}. Let $F\in H^{\infty}(E_{R})$ be such that
$\|F\|_{H^{\infty}}\le 1$ and $|F(x)|\le\Ge$ for all $x\in[-1,1]$.  As
discussed in Section~\ref{sss:el2an}, the function $f(\zeta):= F(J(R\zeta))$
is analytic in $A_{\rho}$, with $\rho = R^{-2}$ and has the symmetry
$f(\overline{\zeta}) = f(\zeta) \quad \forall |\zeta| = r,$ \ where $r =
R^{-1}$. It also satisfies
$$\|f\|_{H^2(A_\rho)} \lesssim \|F\|_{H^\infty(E_R)} \leq 1$$
as well as
\begin{equation*}
\|f\|_{L^2(\Gamma_r)}^2 = \frac{1}{R} \int_{0}^{2\pi} |F(J(e^{it}))|^2 \D t \le\frac{2\pi\Ge^{2}}{R}.
\end{equation*}
Let $z\in E_{R}\setminus[-1,1]$. Let $z_{a}\in A_{\rho}$ be the unique
solution of $J(Rz_{a})=z$, satisfying $|z_{a}|>r$. Then
by Lemma~\ref{LEM annulus symm} (with $\rho=R^{-2}$ and $r = R^{-1}$) we have
\begin{equation*}
|F(z)|=|f(z_a)| \leq C \epsilon^{-\frac{\ln |z_a|}{\ln R}}=C \epsilon^{1-\frac{\ln\left|J^{-1}(z)\right|}{\ln R}}=C \epsilon^{\alpha(z)},
\end{equation*}
where $\Ga(z)$ is given by (\ref{alpha}). This proves (\ref{main bound ellipse}).

In order to prove the optimality of the bound (\ref{main bound ellipse}) we use
Lemma~\ref{LEM sum asymptotics} to show that $M(\Gz)$ given by (\ref{M
  annulus sym}) satisfies
\[
\begin{cases}
  |M(\Gz)|\lesssim \epsilon,&|\Gz|=r,\\
  |M(\Gz)|\lesssim 1, & r<|\Gz|<1.
\end{cases}
\]
Using the Joukowski function to
map this to a function on the Bernstein ellipse we obtain
\begin{equation}
  \label{ChebM}
  M_{\rm ellipse}(\Go)=M\left(R^{-1}J^{-1}(\Go)\right)=\Ge^{2-\Ga(z)}\sum_{n=1}^{\infty}
\frac{\bra{T_{n}(z)}T_{n}(\Go)}{1+\Ge^{2}R^{2n}},
\end{equation}
where $T_n$ is the Chebyshev polynomial of degree $n$. Chebyshev
polynomials are just monomials $\Gz^{n}$ in the annulus after the Joukowski transformation:
\begin{equation*}
J^{-1} \circ T_n \circ J = \Gz\mapsto\zeta^n, \qquad \forall \zeta \neq 0.
\end{equation*} 
We note that due to the choice of the branch of $J^{-1}$ to correspond to a
point in the exterior of the unit disk we can neglect $1/(J^{-1}(z))^{n}$ in
\[
T_{n}(z)=\hf\left((J^{-1}(z))^{n}+\nth{(J^{-1}(z))^{n}}\right).
\]
Thus, the function in (\ref{M ellipse}) is asymptotically equivalent to
(\ref{ChebM}). Theorem~\ref{THM ellipse} is now proved.

\medskip

\noindent\textbf{Acknowledgments.}
The authors wish to thank Mihai Putinar for
enlightening discussions during the BIRS workshop. We also thank the referees
for valuable suggestions and new references.
This material is based upon
work supported by the National Science Foundation under Grant No. DMS-1714287.

\appendix
\section{Appendix}

\begin{lemma} \label{LEM sum asymptotics}
Let $\{a_n, b_n\}_{n=1}^\infty$ be nonnegative numbers such that $a_n \simeq e^{-\alpha n}$ and $b_n \simeq e^{-\beta n}$ with $0< \beta < \alpha$, where the implicit constants don't depend on $n$. Let $\eta > 0$ be a small parameter, then

\begin{equation} \label{sum asymptotics}
\sum_{n=1}^\infty \frac{b_n}{a_n + \eta} \simeq \eta^{\frac{\beta}{\alpha} - 1},\qquad \text{and} \qquad
\sum_{n=1}^\infty \frac{b_n}{(a_n + \eta)^2} \simeq \eta^{\frac{\beta}{\alpha} - 2}
\end{equation}

\noindent where the implicit constants don't depend on $\eta$.

\end{lemma}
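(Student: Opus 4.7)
The plan is to split each series at the crossover index $N := \lfloor\alpha^{-1}\ln(1/\eta)\rfloor$, which is the transition point where $a_n$ passes through the value $\eta$. On the ``head'' range $1\le n\le N$ one has $a_n\gtrsim\eta$, so $a_n+\eta\simeq a_n$; on the ``tail'' range $n>N$ one has $a_n\lesssim\eta$, so $a_n+\eta\simeq\eta$. I expect head and tail to contribute at the same order, giving a matching two-sided bound of $\eta^{\beta/\alpha-1}$ for the first series and $\eta^{\beta/\alpha-2}$ for the second.

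For the head of the first series, I would use $a_n\simeq e^{-\alpha n}$ and $b_n\simeq e^{-\beta n}$ to write
\[
\sum_{n=1}^{N}\frac{b_n}{a_n+\eta}\simeq\sum_{n=1}^{N}\frac{b_n}{a_n}\simeq\sum_{n=1}^{N}e^{(\alpha-\beta)n}.
\]
Since $\alpha-\beta>0$, this geometric partial sum is dominated by its largest term, giving $\simeq e^{(\alpha-\beta)N}$. Substituting $N\simeq\alpha^{-1}\ln(1/\eta)$ converts this to $\eta^{-(\alpha-\beta)/\alpha}=\eta^{\beta/\alpha-1}$. For the tail, the summand is $\simeq e^{-\beta n}/\eta$, and the decaying geometric series is dominated by its first term $\simeq e^{-\beta N}/\eta=\eta^{\beta/\alpha}/\eta=\eta^{\beta/\alpha-1}$. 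Adding head and tail yields the first asymptotic in \eqref{sum asymptotics}.

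The second series is handled by the identical splitting. On the head, $(a_n+\eta)^{-2}\simeq a_n^{-2}\simeq e^{2\alpha n}$, so the summand is $\simeq e^{(2\alpha-\beta)n}$, with partial sum $\simeq e^{(2\alpha-\beta)N}=\eta^{\beta/\alpha-2}$. On the tail, the summand is $\simeq e^{-\beta n}/\eta^{2}$, summing to $\simeq e^{-\beta N}/\eta^{2}=\eta^{\beta/\alpha-2}$. Adding the two matching bounds yields the second asymptotic.

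There is no deep obstacle. The only care required is in propagating the implicit constants in the two-sided relations $a_n\simeq e^{-\alpha n}$ and $b_n\simeq e^{-\beta n}$ uniformly in $n$, so as to justify $a_n+\eta\simeq\max\{a_n,\eta\}$ on each range with constants independent of $\eta$. Rounding $N$ to an integer costs at most an $O(1)$ factor on a single summand and is absorbed by the $\simeq$ notation; since upper and lower estimates run through the same chain of inequalities, the two-sided conclusion follows automatically.
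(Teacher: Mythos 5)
Your argument is correct and is essentially the same as the paper's: the paper also introduces a switchover index $J$ (defined implicitly by $a_n\ge\eta$ iff $n\le J$, which agrees with your explicit $N=\lfloor\alpha^{-1}\ln(1/\eta)\rfloor$ up to $O(1)$), then bounds the head by its last term and the tail by its first term, and converts via $e^{-J}\simeq\eta^{1/\alpha}$. The only cosmetic difference is that the paper expresses the head and tail contributions as $b_J/a_J$ and $b_J/\eta$ respectively before substituting the asymptotics, whereas you substitute immediately; the chain of estimates is the same.
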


\begin{proof}
Let us prove the first assertion of \eqref{sum asymptotics}, the second one will follow analogously. Introduce the switchover index $J=J(\eta) \in \NN$ defined by

\begin{equation*}
\begin{cases}
a_n \geq \eta & \quad \forall \ 1\leq n \leq J
\\
a_n < \eta & \quad \forall \ n> J
\end{cases}
\end{equation*}

\noindent Below all the implicit constants in relations involving $\simeq$ or $\lesssim$ will be independent on $\eta$. It is clear that

\begin{equation*}
\sum_{n=1}^\infty \frac{b_n}{a_n + \eta} \simeq \sum_{n \leq J} \frac{b_n}{a_n} + \frac{1}{\eta} \sum_{n>J} b_n.
\end{equation*}

\noindent Note that

\begin{equation*}
\sum_{n>J} b_n \lesssim \sum_{n>J} e^{-\beta n} \lesssim e^{-\beta (J+1)},
\end{equation*}

\noindent therefore using our assumption on $b_n$ we find

\begin{equation} \label{second sum}
\sum_{n>J} b_n \simeq b_{J+1} \simeq b_J.
\end{equation}

\noindent On the other hand

\begin{equation*}
\sum_{n \leq J} \frac{b_n}{a_n} \lesssim \sum_{n \leq J} e^{(\alpha - \beta) n} = \frac{e^\alpha}{e^\alpha - e^\beta} (e^{(\alpha - \beta) J} - 1) \lesssim e^{(\alpha - \beta) J} \simeq \frac{b_J}{a_J},
\end{equation*}

\noindent Thus we conclude

\begin{equation} \label{first sum}
\sum_{n \leq J} \frac{b_n}{a_n} \simeq \frac{b_J}{a_J}.
\end{equation}

\noindent Now $\eta \simeq a_J$ and $a_J \simeq e^{-\alpha J}$, therefore $e^{-J} \simeq \eta^{\frac{1}{\alpha}}$. Using these along with \eqref{second sum} and \eqref{first sum} we obtain   

\begin{equation*}
\sum_{n=1}^\infty \frac{b_n}{a_n + \eta} \simeq \frac{b_J}{a_J} + \frac{b_J}{\eta} \simeq \frac{b_J}{a_J} \simeq e^{(\alpha- \beta) J} \simeq \eta^{\frac{\beta}{\alpha} - 1}.
\end{equation*}

\end{proof}

\bibliographystyle{abbrv}
\bibliography{refs}

\def\cprime{$'$} \ifx \cedla \undefined \let \cedla = \c \fi\ifx \cyr
  \undefined \let \cyr = \relax \fi\ifx \cprime \undefined \def \cprime
  {$\mathsurround=0pt '$}\fi\ifx \prime \undefined \def \prime {'}
  \fi\def\Ya{Ya}
\begin{thebibliography}{10}

\bibitem{aize12}
L.~A. Aizenberg.
\newblock {\em Carleman's formulas in complex analysis: theory and
  applications}, volume 244.
\newblock Springer Science \& Business Media, 2012.

\bibitem{bdmh19}
D.~Batenkov, L.~Demanet, and H.~N. Mhaskar.
\newblock Stable soft extrapolation of entire functions.
\newblock {\em Inverse Problems}, 35(1):015011, 2018.

\bibitem{berg}
D.~J. Bergman.
\newblock Exactly solvable microscopic geometries and rigorous bounds for the
  complex dielectric constant of a two-component composite material.
\newblock {\em Phys. Rev. Lett.}, 44:1285--1287, 1980.

\bibitem{bern12}
S.~Bernstein.
\newblock Sur la valeur asymptotique de la meilleure approximation des
  fonctions analytiques.
\newblock {\em Comptes Rendus de l'Acad{\'e}mie des Sciences}, 155:1062--1065,
  1912.

\bibitem{cami65}
J.~Cannon and K.~Miller.
\newblock Some problems in numerical analytic continuation.
\newblock {\em Journal of the Society for Industrial and Applied Mathematics,
  Series B: Numerical Analysis}, 2(1):87--98, 1965.

\bibitem{carl26}
T.~Carleman.
\newblock {\em Les Fonctions quasi analytiques: le{\c{c}}ons profess{\'e}es au
  Coll{\`e}ge de France}.
\newblock Gauthier-Villars et Cie, 1926.

\bibitem{chou08}
E.~Cherkaev and M.-J. Yvonne~Ou.
\newblock Dehomogenization: reconstruction of moments of the spectral measure
  of the composite.
\newblock {\em Inverse Problems}, 24(6):065008, 19, 2008.

\bibitem{ciulli69}
S.~Ciulli.
\newblock A stable and convergent extrapolation procedure for the scattering
  amplitude.---i.
\newblock {\em Il Nuovo Cimento A (1965-1970)}, 61(4):787--816, Jun 1969.

\bibitem{davis52}
P.~Davis.
\newblock An application of doubly orthogonal functions to a problem of
  approximation in two regions.
\newblock {\em Transactions of the American Mathematical Society},
  72(1):104--137, 1952.

\bibitem{deto18}
L.~Demanet and A.~Townsend.
\newblock Stable extrapolation of analytic functions.
\newblock {\em Foundations of Computational Mathematics}, 19(2):297--331, 2018.

\bibitem{digr01}
A.~Dienstfrey and L.~Greengard.
\newblock Analytic continuation, singular-value expansions, and kramers-kronig
  analysis.
\newblock {\em Inverse Problems}, 17(5):1307, 2001.

\bibitem{doug60}
J.~Douglas~Jr.
\newblock A numerical method for analytic continuation.
\newblock In {\em Boundary Value Problems in Difference Equation}, pages
  179--189. University of Wisconsin Press, Madison, WI, 1960.

\bibitem{duren}
P.~Duren.
\newblock {\em Theory of Hp Spaces}.
\newblock Dover books on mathematics. Dover Publications, 2000.

\bibitem{fran90}
J.~Franklin.
\newblock Analytic continuation by the fast fourier transform.
\newblock {\em SIAM journal on scientific and statistical computing},
  11(1):112--122, 1990.

\bibitem{fdfd09}
C.-L. Fu, Z.-L. Deng, X.-L. Feng, and F.-F. Dou.
\newblock A modified tikhonov regularization for stable analytic continuation.
\newblock {\em SIAM Journal on Numerical Analysis}, 47(4):2982--3000, 2009.

\bibitem{chuyu}
C.-L. Fu, Y.-X. Zhang, H.~Cheng, and Y.-J. Ma.
\newblock Numerical analytic continuation on bounded domains.
\newblock {\em Engineering Analysis with Boundary Elements}, 36:493--504, 04
  2012.

\bibitem{gopa83}
K.~M. Golden and G.~Papanicolaou.
\newblock Bounds for effective parameters of heterogeneous media by analytic
  continuation.
\newblock {\em Comm. Math. Phys.}, 90:473--491, 1983.

\bibitem{gokr33}
G.~Goluzin and V.~Krylov.
\newblock A generalized carleman formula and its application to analytic
  continuation of functions.
\newblock {\em Mat. Sb}, 40(2):144--149, 1933.

\bibitem{grho-CEMP}
Y.~Grabovsky and N.~Hovsepyan.
\newblock On feasibility of extrapolation of the complex electromagnetic
  permittivity function using {K}ramer-{K}ronig relations.
\newblock in preparation.

\bibitem{grho-gen}
Y.~Grabovsky and N.~Hovsepyan.
\newblock Optimal error estimates for analytic continuation in the upper
  half-plane.
\newblock {\em Comm Pure Appl Math}, 2021.
\newblock to appear.

\bibitem{gps03}
B.~Gustafsson, M.~Putinar, and H.~S. Shapiro.
\newblock Restriction operators, balayage and doubly orthogonal systems of
  analytic functions.
\newblock {\em Journal of Functional Analysis}, 199(2):332--378, 2003.

\bibitem{koos98}
P.~Koosis.
\newblock {\em Introduction to $H_p$ spaces}.
\newblock Number 115 in Cambridge Tracts in Mathematics. Cambridge University
  Press, 1998.

\bibitem{mill70}
K.~Miller.
\newblock Least squares methods for ill-posed problems with a prescribed bound.
\newblock {\em SIAM Journal on Mathematical Analysis}, 1(1):52--74, 1970.

\bibitem{mi81}
G.~W. Milton.
\newblock Bounds on the transport and optical properties of a two-component
  composite material.
\newblock {\em Journal of Applied Physics}, 52:5294--5304, 1981.

\bibitem{mchcg15}
N.~B. Murphy, E.~Cherkaev, C.~Hohenegger, and K.~M. Golden.
\newblock Spectral measure computations for composite materials.
\newblock {\em Commun. Math. Sci.}, 13(4):825--862, 2015.

\bibitem{ou14}
M.-J.~Y. Ou.
\newblock On reconstruction of dynamic permeability and tortuosity from data at
  distinct frequencies.
\newblock {\em Inverse Problems}, 30(9):095002, 2014.

\bibitem{parf78}
O.~Parfenov.
\newblock The asymptotic behavior of singular numbers of integral operators
  with cauchy kernel, and its consequences.
\newblock {\em Manuscript No. 2405-78 deposited at VINITI}, 1978.

\bibitem{parfenov}
O.~G. Parfenov.
\newblock Asymptotics of singular numbers of imbedding operators for certain
  classes of analytic functions.
\newblock {\em Mathematics of the {USSR}-Sbornik}, 43(4):563--571, apr 1982.

\bibitem{part97}
J.~R. Partington et~al.
\newblock {\em Interpolation, identification, and sampling}.
\newblock Number~17 in London Mathematical Society monographs (new series).
  Oxford University Press, 1997.

\bibitem{payne75}
L.~E. Payne.
\newblock {\em Improperly posed problems in partial differential equations},
  volume~22.
\newblock Siam, 1975.

\bibitem{pute17}
M.~Putinar and J.~E. Tener.
\newblock Singular values of weighted composition operators and second
  quantization.
\newblock {\em International Mathematics Research Notices},
  2018(20):6426--6441, 2017.

\bibitem{sara65}
D.~Sarason.
\newblock {\em The $ H^{p} $ spaces of an annulus}.
\newblock Number~56 in Memoirs of the American Mathematical Society. American
  Mathematical Soc., 78 S. Providence, Rhode Island, 1965.

\bibitem{stef86}
I.~S. Stefanescu.
\newblock On the stable analytic continuation with a condition of uniform
  boundedness.
\newblock {\em Journal of mathematical physics}, 27(11):2657--2686, 1986.

\bibitem{tref12}
L.~N. Trefethen.
\newblock {\em Approximation Theory and Approximation Practice (Other Titles in
  Applied Mathematics)}.
\newblock Society for Industrial and Applied Mathematics, Philadelphia, PA,
  USA, 2012.

\bibitem{trefe19}
L.~N. Trefethen.
\newblock Quantifying the ill-conditioning of analytic continuation.
\newblock {\em arXiv preprint arXiv:1908.11097}, 2019.

\bibitem{vese99}
S.~Vessella.
\newblock A continuous dependence result in the analytic continuation problem.
\newblock {\em Forum Mathematicum}, 11(6):695--703, 1999.

\bibitem{zask76}
V.~Zaharjuta and N.~Skiba.
\newblock Estimates of the n-widths of certain classes of functions that are
  analytic on {R}iemann surfaces.
\newblock {\em Mat. zametki}, 19(6):899--911, 1976.

\end{thebibliography}
\end{document}